\def\thanks#1{{\let\thefootnote\relax\footnote{#1.}\setcounter{footnote}{0}}}
\DeclareMathOperator{\aut}{Aut} 
\DeclareMathOperator{\cone}{cone} 
\DeclareMathOperator{\inte}{int} 
\DeclareMathOperator{\bd}{bd} 
\DeclareMathOperator{\interior}{int} 
\DeclareMathOperator{\linspan}{span} 
\DeclareMathOperator{\vol}{vol} 
\newcommand\del{\backslash} 
\newcommand{\R}{\mathbb{R}} 
\newcommand{\Z}{\mathbb{Z}} 
\newcommand{\scalPr}[2]{\langle{#1},{#2}\rangle}
\newcommand\norm[1]{\left\lVert#1\right\rVert}
\newcommand\inprod[1]{{\left\langle #1 \right\rangle}}
\newcommand\ov{\overline}
\newtheorem{thm}{Theorem}
\newtheorem{lemma}[thm]{Lemma}
\newtheorem{cor}[thm]{Corollary}
\newtheorem{rem}[thm]{Remark}
\newtheorem{example}[thm]{Example}
\begin{document}

\title[Normalizations of cone factorizations]{Normalizations of factorizations over convex cones and their effects on extension complexity}

\author[1]{\fnm{Adam} \sur{Brown}}\email{ajmbrown@gatech.edu}
\equalcont{These authors contributed equally to this work.}

\author*[2]{\fnm{Kanstantsin} \sur{Pashkovich} \thanks{January 6, 2025 (revised: \today)\\ Research of K. Pashkovich
was supported in part by Discovery Grants from the Natural Sciences and Engineering Research
Council (NSERC) of Canada}}\email{kpashkov@uwaterloo.ca}
\equalcont{These authors contributed equally to this work.}

\author[2]{\fnm{Levent} \sur{Tun\c{c}el} \thanks{Research of L. Tun\c{c}el
was supported in part by Discovery Grants from the Natural Sciences and Engineering Research
Council (NSERC) of Canada}}\email{levent.tuncel@uwaterloo.ca}
\equalcont{These authors contributed equally to this work.}

\affil[1]{\orgdiv{School of Mathematics}, \orgname{Georgia Institute of Technology}, \orgaddress{\street{686 Cherry Street}, \city{Atlanta}, \postcode{30318}, \state{Georgia}, \country{United States}}}

\affil[2]{\orgdiv{Department for Combinatorics and Optimization}, \orgname{University of Waterloo}, \orgaddress{\street{200 University Avenue West}, \city{Waterloo}, \postcode{N2L 3G1}, \state{Ontario}, \country{Canada}}}

\abstract{Factorizations over cones and their duals play central roles for many areas of mathematics and computer science. One of the reasons behind this is the ability to find a representation for  various objects using a well-structured family of cones, where the representation is captured by the factorizations over these cones.   Several major questions about factorizations over cones remain open even for such well-structured families of cones as non-negative orthants and positive semidefinite cones. Having said that, we possess a far better understanding of factorizations over  non-negative orthants and positive semidefinite cones than over other families of  cones. One of the key properties that led to this better understanding is the ability to normalize factorizations, i.e., to guarantee that the norms of the vectors involved in the factorizations are bounded in terms of an input and in terms of  a constant dependent on the given cone. Our work aims at understanding which cones guarantee that factorizations over them can be normalized, and how this effects extension complexity of polytopes over such cones.}

\keywords{Factorizations over cones, normalizations over cones, convex geometry, extended representations of polytopes, cone automorphisms, self-concordant barriers.}

\maketitle

\section{Introduction}

A convex cone is called \emph{regular}, if it is closed, pointed and has nonempty interior. 
Let $\mathcal C$ be a regular convex cone in $\R^n$, then its \emph{dual} $\mathcal C^\star$ is defined as
\[
    \mathcal C^\star:=\{x\in \R^n\,:\,\langle x,\, y\rangle \geq 0\quad \text{for all} \quad y\in \mathcal C\}\,.
\]
If $\mathcal{C}$ is a regular convex cone then so is its dual cone.
\emph{Factorization problem over a cone} (and its dual) is described
in~\cite[Definition 2.2]{Gouveia}. 
Factorizations over cones play an important role for several areas of mathematics and computer science~\cite{lee1999}, \cite{devarajan2008}, \cite{ding2005}, \cite{kubjas2015},\cite{fawzi2015}, etc. Intuitively, non-negative factorizations (and the related nonnegative rank) allow to compress information about features of various objects to fewer key features. In this spirit, Yannakakis established the connection between the nonnegative rank of slack matrices and ``compressing" polytopes. In particular, the work of Yannakakis~\cite{yannakakis} showed that so-called \emph{linear extension complexity} of a nontrivial polytope equals the \emph{nonnegative rank} of its slack matrix.

 Now we know explicit families of $0/1$-polytopes that do not admit any linear or positive semidefinite extension of polynomial size~\cite{Fiorini12}, \cite{rothvoss2017} and \cite{lee2015}.  
Surprisingly, for several decades after the work of Yannakakis~\cite{yannakakis}, it was an open question  whether every $0/1$-polytope in $\R^n$ has linear extension complexity that is polynomial in $n$. For the first time this question was answered in negative by Rothvoss~\cite{rothvoss201101}. The arguments in the work of Rothvoss~\cite{rothvoss201101} are based on counting and  can be starightforwardly extended to stronger collections of extensions, as long as these collections admit \emph{a normalization of factorizations}.

In their work on positive semidefinite cones, Bri\"{e}t et al.~\cite{briet2013existence} provided a negative answer for a broader class of extensions known as positive semidefinite extensions by introducing a normalization approach. Subsequent research by Fawzi et al.~\cite{fawzi2015} expanded on this by presenting another normalization method using John-L\"{o}wner ellipsoids. Averkov et al.~\cite{Averkov2018} similarly employed these ellipsoids to analyze the semidefinite extension complexity of polytopes.

The utility of John-L\"{o}wner ellipsoids for normalization has proven versatile, finding applications beyond the analysis of positive semidefinite cone extension complexity, such as in Banach space theory, as mentioned in the work~\cite[Lemma 4.2]{LMSS2007}. Building on these foundations, our research aims to extend normalization results to more general convex cones, seeking to broaden our understanding of cone normalizations.

Interestingly, while our arguments in Section~\ref{sec:3} may initially appear very distinct, they nonetheless share underlying connections to the problem of constructing structured minimum volume ellipsoids that contain specific sets.

The limitations of extended formulations were studied also in the context of polytopes that are not necessarily 0/1. The normalization of factorizations plays an important role for providing lower bounds on extension complexity  for various families of polytopes. For example, the extension complexity of even very small-dimensional polytopes is not fully understood at the moment. In particular, for polytopes of dimension two, first non-trivial lower bounds for linear extension complexity were provided in~\cite{Fiorini12}: a lower bound of $\Omega(\sqrt n)$ for a general $n$-gon and $\Omega(\sqrt n/\log(n))$ for an integral $n$-gon, where the last result was building on the normalization and counting ideas from~\cite{rothvoss201101}. There is a gap between these lower bounds and currently best upper bounds on extensions complexity of polygons. Recently, Shitov\cite{shitov2020} provided a sublinear upper bound of $O(n^{2/3})$ for linear extension complexity of general $n$-gons. In some randomized constructions for polytopes with $n$ vertices and with a  constant dimension, in~\cite{kwan2022} it was shown that the linear extension complexity is asymptotically almost surely equal to $\Theta(\sqrt{n})$. Thus, even for polytopes of dimension two, many questions about linear extension complexity remain open. 

\subsection{Problem of Normalizing Factorizations}\label{sec:problem_definition}

Before further discussion, let us formulate the question about normalization of convex cones.
Given a regular convex cone $\mathcal C$ we say that it is possible to \emph{normalize} factorizations over this cone and its dual with a constant $f_{\mathcal C}\in \R_+$ if  for every pair of compact sets 
 $\mathcal A\subseteq \mathcal C$ and $\mathcal B\subseteq \mathcal C^\star$ there exists a pair of sets $\widetilde{\mathcal A}\subseteq \mathcal C$ and $\widetilde{\mathcal B}\subseteq \mathcal C^\star$ and two bijections $g:\mathcal A\rightarrow \widetilde{\mathcal A}$, $q:\mathcal B \rightarrow\widetilde{\mathcal B}$ such that 
\begin{equation}\label{eq:normalization_matrix}
\langle a,\, b\rangle= \langle g(a),\, q(b)\rangle\qquad\text{for all}\quad a\in \mathcal A,\, b\in \mathcal B
\end{equation}
\begin{align}
    &\|\widetilde a\|_2\leq f_{\mathcal C}\sup_{a\in \mathcal A, b\in \mathcal B} \langle a,\, b\rangle \qquad\text{for all} \quad \widetilde a \in \widetilde{\mathcal A}\label{eq:normalization_first_set}\\
    &\| \widetilde b\|_2\leq f_{\mathcal C}\sup_{a\in \mathcal A, b\in \mathcal B} \langle a,\, b\rangle \qquad\text{for all}\quad \widetilde b \in \widetilde{\mathcal B}\label{eq:normalization_second_set}\,.
\end{align}
Informally, we will also say that in the above case the sets $\widetilde{\mathcal A}$ and $\widetilde{\mathcal B}$ are being \emph{rescaled}.

In Section 2, we prove that if a regular convex cone admits normalization of factorizations then such a normalization can be achieved via automorphisms of the cone. In Section 3, for indecomposable symmetric cones $\mathcal{C}$, we relate the normalization constant $f_{\mathcal C}$ to the Carath\'{e}odory number of $\mathcal{C}$. More generally, for indecomposable homogeneous cones ${\mathcal C}$, we bound the normalization constant $f_{\mathcal C}$ in terms of the barrier parameter of certain self-concordant barriers for $\mathcal C$. While we prove that normalization of factorizations is possible over every homogeneous cone, we show that normalization of factorizations is impossible for even some elementary hyperbolic cones. In Section 4, we discuss the consequences of normalization of factorizations on extension complexity.

\section{Normalization and Automorphisms}
In this section, we show that if a regular convex cone allows a normalization of factorizations, then there exists a normalization induced by automorphisms of this cone and its dual. First, we show that if we are rescaling sets $\mathcal A\subseteq \mathcal C$, $\mathcal B\subseteq \mathcal C^\star$ which span $\R^n$, then the bijections $g:\mathcal A\rightarrow \widetilde{\mathcal A}$, $q:\mathcal B \rightarrow\widetilde{\mathcal B}$ as in Section~\ref{sec:problem_definition} will always be invertible linear maps.

\begin{lemma}\label{linear-normalization}
    Let $\mathcal C$ be a regular convex cone and let $\mathcal C^*$ be its dual. Let $\mathcal A$ and $\mathcal B$ be subsets of $\mathcal C$ and $\mathcal C^*$ respectively. If $\mathcal A$ and $\mathcal B$ both span $\R^n$ and there exist two sets $\widetilde{\mathcal A}\subseteq \mathcal C$ and $\widetilde{\mathcal B}\subseteq \mathcal C^\star$ and two bijections $g:\mathcal A\rightarrow \widetilde{\mathcal A}$, $q:\mathcal B \rightarrow\widetilde{\mathcal B}$ such that 
    $$\langle a,\, b\rangle= \langle g(a),\, q(b)\rangle\qquad\text{for all}\quad a\in \mathcal A,\, b\in \mathcal B,$$
    then $g$ and $q$ are invertible linear maps. Moreover, $q$ is the inverse-adjoint of $g$.
\end{lemma}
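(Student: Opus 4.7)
My plan is to pick a basis within each spanning set, convert the inner-product identity into matrix form, and read off $g$ and $q$ as restrictions of explicit invertible linear maps. Since $\mathcal{A}$ and $\mathcal{B}$ both span $\R^n$, I first choose linearly independent $a_1,\dots,a_n\in\mathcal{A}$ and $b_1,\dots,b_n\in\mathcal{B}$, and arrange them as columns of invertible matrices $A,B\in\R^{n\times n}$. Letting $\widetilde A$ and $\widetilde B$ be the matrices with columns $g(a_i)$ and $q(b_j)$, the hypothesis $\langle a_i,b_j\rangle=\langle g(a_i),q(b_j)\rangle$ collapses into the single matrix identity
\[
A^T B \;=\; \widetilde A^T \widetilde B.
\]
The left-hand side is invertible (as the product of two invertible matrices), which forces both $\widetilde A$ and $\widetilde B$ to be invertible as well. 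This invertibility is the only step where the spanning assumption is used, and it is the one I expect to require the most care.

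Next, for an arbitrary $a\in\mathcal{A}$, testing the inner-product identity against each $b_j$ separately gives the $n$ scalar equations $\langle a,b_j\rangle=\langle g(a),q(b_j)\rangle$, which bundle together as $B^T a = \widetilde B^T g(a)$. Since $\widetilde B$ is invertible, I can solve to obtain $g(a) = \widetilde B^{-T} B^T\, a$. Therefore $g$ agrees on $\mathcal{A}$ with the invertible linear operator $G := \widetilde B^{-T} B^T$, and the analogous calculation for $b\in\mathcal{B}$ shows that $q$ agrees with the invertible linear operator $Q := \widetilde A^{-T} A^T$. In particular $g$ and $q$ extend (uniquely, because $\mathcal{A}$ and $\mathcal{B}$ span $\R^n$) to invertible linear maps on all of $\R^n$.

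Finally, to confirm the inverse-adjoint relation, I compute $G^T Q = B\,\widetilde B^{-1}\,\widetilde A^{-T}\,A^T$ and observe that this equals the identity exactly when $\widetilde A^T \widetilde B = A^T B$, which is precisely the Gram-matrix identity from the first paragraph. Hence $Q = (G^T)^{-1} = (G^{-1})^T$, i.e.\ $q=(g^{-1})^*$, as required. Beyond confirming that $\widetilde A$ and $\widetilde B$ are invertible, the argument is pure linear algebra and should not present further difficulty.
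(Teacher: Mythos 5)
Your proof is correct and follows essentially the same route as the paper's: choose a basis from each spanning set, derive the Gram-matrix identity $A^{\top}B=\widetilde A^{\top}\widetilde B$ to get invertibility of $\widetilde A,\widetilde B$, read off $g$ and $q$ as explicit invertible linear maps, and deduce the inverse-adjoint relation from the same identity. The only difference is cosmetic (columns versus rows in the matrix setup).
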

\begin{proof}
Let us select $n$ linearly independent vectors $a_1$, $a_2$, \ldots, $a_n$ in~$\mathcal A$ and $n$ linearly independent vectors $b_1$, $b_2$, \ldots, $b_n$ in~$\mathcal B$. Let us construct matrices $R$, $\widetilde R$, $L$, $\widetilde L\in \R^{n\times n}$, where for each $i \in \{1, 2, \ldots,n\}$ the matrices $R$, $\widetilde R$, $L$ and $\widetilde L$ have as $i$-th row the vector $a_i^{\top}$, $g(a_i)^{\top}$, $b_i^{\top}$ and $q(b_i)^{\top}$, respectively. Clearly, the matrices $R$ and $L$ are non-singular. The matrices $\widetilde R$ and $\widetilde L$ are also non-singular, since $R\cdot L^{\top}$ is a non-singular matrix and $R \cdot L^{\top}=\widetilde R \cdot \widetilde L^{\top}$. 

For all $a\in \mathcal A$ we have $L\cdot a=\widetilde L \cdot g(a)$, so for all $a\in \mathcal A$ we can write $g(a)=\widetilde{L}^{-1}\cdot L\cdot a$, showing that $g$ is a linear map. Analogously, for all $b\in \mathcal B$ we have $q(b)=\widetilde{R}^{-1}\cdot R\cdot b$, showing that $q$ is a linear map.

From the formula $R \cdot L^{\top}=\widetilde R \cdot \widetilde L^{\top}$, we can rearrange to get 
$$R^{-1}\cdot \widetilde R = L^{\top}\cdot (\widetilde L)^{-\top} = \left( L^{-1} \cdot \widetilde L\right)^{-\top}.$$
Since we noted that $g(a)=\widetilde{L}^{-1}\cdot L\cdot a$ and $q(b)=\widetilde{R}^{-1}\cdot R\cdot b$, we conclude that $q$ is the inverse-adjoint of $g$.
\end{proof}

Now we show that if any compact subset of a given cone can be normalized, then they can be normalized by automorphisms of the cone itself.

\begin{thm}\label{thm:rescaling_automorphism}
  Let $\mathcal C\subseteq \R^n$ be a regular convex cone. Suppose there exists a constant $f_{\mathcal C}\in \R_+$ such that for every pair of compact sets 
 $\mathcal A\subseteq \mathcal C$ and $\mathcal B\subseteq \mathcal C^\star$, there exist a pair of sets $\widetilde{\mathcal A}\subseteq \mathcal C$ and $\widetilde{\mathcal B}\subseteq \mathcal C^\star$ and a pair of bijections $g:\mathcal A\rightarrow \widetilde{\mathcal A}$, $q:\mathcal B \rightarrow\widetilde{\mathcal B}$ such that \eqref{eq:normalization_matrix},~\eqref{eq:normalization_first_set},~\eqref{eq:normalization_second_set} hold. Then, there exist bijections $g:\mathcal A\rightarrow \widetilde{\mathcal A}$, $q:\mathcal B \rightarrow\widetilde{\mathcal B}$ such that \eqref{eq:normalization_matrix},~\eqref{eq:normalization_first_set},~\eqref{eq:normalization_second_set} hold and $g$ is an automorphism of $\mathcal C$ and $q$ is an automorphism of $\mathcal{C}^\star.$
 
\end{thm}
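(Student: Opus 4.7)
The plan is to reduce the general case to the spanning case handled by Lemma~\ref{linear-normalization} by enlarging $\mathcal A$ and $\mathcal B$ in a carefully scaled way. Fix compact generating sets $S_0 \subseteq \mathcal C$ and $T_0 \subseteq \mathcal C^\star$ with $\cone(S_0) = \mathcal C$ and $\cone(T_0) = \mathcal C^\star$, for example $S_0 := \mathcal C \cap \{x \in \R^n : \|x\|_2 = 1\}$ and analogously $T_0$; these are compact since $\mathcal C$ and $\mathcal C^\star$ are closed. Setting $M := \sup_{a \in \mathcal A, b \in \mathcal B} \langle a, b\rangle$ and assuming $M > 0$ (the degenerate case $M = 0$ forces $\mathcal A$ and $\mathcal B$ to lie in mutually annihilating faces and admits a small separate argument), one picks $\epsilon > 0$ small enough that each of the auxiliary suprema
\[
\epsilon \sup_{s_0 \in S_0,\, b \in \mathcal B} \langle s_0, b\rangle,\quad \epsilon \sup_{a \in \mathcal A,\, t_0 \in T_0} \langle a, t_0\rangle,\quad \epsilon^2 \sup_{s_0 \in S_0,\, t_0 \in T_0} \langle s_0, t_0\rangle
\]
is at most $M$; compactness of all four sets makes this possible. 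Define $\mathcal A' := \mathcal A \cup \epsilon S_0 \subseteq \mathcal C$ and $\mathcal B' := \mathcal B \cup \epsilon T_0 \subseteq \mathcal C^\star$. Both are compact, both span $\R^n$ because their conic hulls equal the full-dimensional cones $\mathcal C$ and $\mathcal C^\star$, and by construction $\sup_{a' \in \mathcal A',\, b' \in \mathcal B'} \langle a', b'\rangle = M$.

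Applying the normalization hypothesis to $(\mathcal A', \mathcal B')$ then yields bijections $g' : \mathcal A' \to \widetilde{\mathcal A}'$ and $q' : \mathcal B' \to \widetilde{\mathcal B}'$ satisfying \eqref{eq:normalization_matrix}--\eqref{eq:normalization_second_set}. Since $\mathcal A'$ and $\mathcal B'$ span $\R^n$, Lemma~\ref{linear-normalization} upgrades $g'$ and $q'$ to invertible linear maps on $\R^n$ with $q' = (g')^{-\top}$. To see $g'$ is an automorphism of $\mathcal C$, observe that $g'(\epsilon S_0) \subseteq \widetilde{\mathcal A}' \subseteq \mathcal C$, so $g'(S_0) \subseteq \mathcal C$ by homogeneity of $\mathcal C$, and therefore
\[
g'(\mathcal C) = g'(\cone(S_0)) = \cone(g'(S_0)) \subseteq \cone(\mathcal C) = \mathcal C.
\]
Dually, $q'(T_0) \subseteq \mathcal C^\star$ gives $q'(\mathcal C^\star) \subseteq \mathcal C^\star$, which via $q' = (g')^{-\top}$ is equivalent to $(g')^{-1}(\mathcal C) \subseteq \mathcal C$; combining, $g'(\mathcal C) = \mathcal C$. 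Hence $g'$ is an automorphism of $\mathcal C$, and symmetrically $q'$ is an automorphism of $\mathcal C^\star$.

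Setting $g := g'$, $q := q'$, $\widetilde{\mathcal A} := g(\mathcal A)$, and $\widetilde{\mathcal B} := q(\mathcal B)$ then yields the conclusion: \eqref{eq:normalization_matrix} is immediate from $q = g^{-\top}$, and \eqref{eq:normalization_first_set}, \eqref{eq:normalization_second_set} are inherited from the normalization of $(\mathcal A', \mathcal B')$ because the relevant supremum is still $M$ and $\mathcal A \subseteq \mathcal A'$, $\mathcal B \subseteq \mathcal B'$. The main obstacle is the calibration of $\epsilon$: the added sets $\epsilon S_0, \epsilon T_0$ must be simultaneously large enough to generate the full cones (so that the resulting linear map is actually a cone automorphism via the \emph{a fortiori} argument above) and small enough that they do not inflate the supremum beyond $M$ (so that the norm bounds transfer back to $\mathcal A, \mathcal B$). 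Compactness of $\mathcal A, \mathcal B, S_0, T_0$ together with the mild nondegeneracy $M > 0$ is precisely what makes this balance achievable.
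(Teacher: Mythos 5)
Your proposal is correct and follows essentially the same strategy as the paper: enlarge $\mathcal A$ and $\mathcal B$ to compact sets whose conic hulls are all of $\mathcal C$ and $\mathcal C^\star$ without increasing the supremum, invoke Lemma~\ref{linear-normalization} to get invertible linear maps with $q'$ the inverse-adjoint of $g'$, and then use duality ($\mathcal C^{\star\star}=\mathcal C$) to conclude that these maps are cone automorphisms. The only differences are cosmetic: the paper augments by small balls intersected with the cones rather than by scaled unit-sphere slices, runs the automorphism step as a proof by contradiction rather than your direct two-inclusion argument, and treats the degenerate case $\sup\langle a,b\rangle=0$ explicitly (by a uniform scaling) where you defer it.
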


Before we start the proof of Theorem~\ref{thm:rescaling_automorphism}, let us outline the idea behind it. In the proof, we construct $\mathcal A'\subseteq \mathcal C$ and $\mathcal B'\subseteq \mathcal C^\star$ such that $\mathcal A\subseteq \mathcal A'$, $\mathcal B\subseteq \mathcal B'$ and 
\[
    \sup_{a\in \mathcal A, b\in \mathcal B} \langle a,\, b\rangle=\sup_{a'\in \mathcal A', b'\in \mathcal B'} \langle a',\, b'\rangle\,.
\]
Since we assume that it is possible to normalize factorizations over $\mathcal C$ and $\mathcal C^\star$ with respect to the constant $f_{\mathcal C}$, there will exist a pair of sets $\widetilde{\mathcal A}'\subseteq \mathcal C$ and $\widetilde{\mathcal B}'\subseteq \mathcal C^\star$ and  bijections $g':\mathcal A'\rightarrow \widetilde{\mathcal A}'$, $q':\mathcal B' \rightarrow\widetilde{\mathcal B}'$ such that such that the analogues of \eqref{eq:normalization_matrix},~\eqref{eq:normalization_first_set},~\eqref{eq:normalization_second_set} hold for these sets. Clearly, then the restriction of $g'$ to $\mathcal A$ and the restriction of $q'$ to $\mathcal B$ can be used to define a normalization for a factorization corresponding to $\mathcal A$ and $\mathcal B$. The structure of the constructed $\mathcal A'$ and $\mathcal B'$ forces the bijections $g'$ and $q'$ to be an automorphism of $\mathcal C$ and an automorphism of $\mathcal C^\star$, respectively.

\begin{proof}[Proof of Theorem~\ref{thm:rescaling_automorphism}]
First of all, let us consider a special case, when $\sup_{a\in \mathcal A, b\in \mathcal B} \langle a,\, b\rangle$ equals $0$. In this case, both $g$ and $q$ can be defined as $g:x\mapsto \varepsilon\cdot x$ and $q:x\mapsto \varepsilon\cdot x$ for a sufficiently small $\varepsilon$, $\varepsilon>0$, because both $\mathcal A$ and $\mathcal B$ are compact and so bounded. Clearly, for every cone such maps $g$ and $q$ are automorphisms. In the rest of the proof, we assume that $\gamma:=\sup_{a\in \mathcal A, b\in \mathcal B} \langle a,\, b\rangle$ is strictly larger than~$0$.

Since $\mathcal A$ and $\mathcal B$ are compact sets, there exists $M$, $M>0$ such that 
\[
    \|a\|_2\leq M\qquad \text{and}\qquad \|b\|_2\leq M \qquad\text{for all}\quad a\in \mathcal A,\, b\in \mathcal B
\,.\]
Now let us define $\mathcal A'\subseteq \mathcal C$ and $\mathcal B'\subseteq \mathcal C^\star$
\begin{align*}
    &\mathcal A':=\mathcal A\cup \left\{ x\in \mathcal C\,:\,\|x\|_2\leq \min\left\{\frac{\gamma}{M},\sqrt{\gamma}\right\} \right\}\\
    &\mathcal B':=\mathcal B\cup \left\{ x\in \mathcal C^\star\,:\,\|x\|_2\leq \min\left\{\frac{\gamma}{M},\sqrt{\gamma}\right\} \right\}\,.
\end{align*}
It is straightforward to check that both $\mathcal A'$ and $\mathcal B'$ are compact sets and that $\sup_{a\in \mathcal A, b\in \mathcal B} \langle a,\, b\rangle$ equals $\sup_{a'\in \mathcal A', b'\in \mathcal B'} \langle a',\, b'\rangle$.

By assumption, there exist bijections $g':\mathcal{A}'\rightarrow\widetilde{\mathcal A}'$ and $q':\mathcal{B}'\rightarrow\widetilde{\mathcal B}'$ satisfying the analogues of \eqref{eq:normalization_matrix},~\eqref{eq:normalization_first_set},~\eqref{eq:normalization_second_set}. Since $\mathcal C$ is full dimensional the sets $\mathcal{A}'$ and $\mathcal{B}'$ are spanning, so Lemma \ref{linear-normalization} implies that $g'$ and $q'$ are invertible linear maps.

Now let us show that the bijections $g'$ and $q'$ are automorphisms of $\mathcal C$ and $\mathcal C^\star$, respectively. For this, it is enough to show $g'(\cone(\mathcal A'))= \mathcal C$ and $q'(\cone(\mathcal B'))= \mathcal C^\star$, since $\cone(\mathcal A')=\mathcal C$ and $\cone(\mathcal B')=\mathcal C^\star$.  By definition, we have $g'(\cone(\mathcal A'))\subseteq \mathcal C$ and $q'(\cone(\mathcal B'))\subseteq \mathcal C^\star$. So it is enough to show that $g'(\cone(\mathcal A'))\supseteq \mathcal C$ and $q'(\cone(\mathcal B'))\supseteq \mathcal C^\star$. Let us assume, that we have $g'(\cone(\mathcal A'))\supsetneq \mathcal C$, i.e. let us assume that  there exists $\widetilde a' \in \mathcal C$ such that $\widetilde a'$ is not in $g'(\cone(\mathcal A'))$.  The preimage $(g')^{-1}(\widetilde a')$ has a nonnegative scalar product with all vectors $b'\in \mathcal B'$. Since $\cone(\mathcal B')=\mathcal C^\star$, we have that $(g')^{-1}(\widetilde a')$ is in $\mathcal C$. However, since $\widetilde a'$ is not in $g'(\cone(\mathcal A'))$, we know that $(g')^{-1}(\widetilde a')$ is not in $\cone(\mathcal A')=\mathcal C$, contradiction. Analogously, we have $q'(\cone(\mathcal B'))\supseteq \mathcal C^\star$, finishing the proof.
\end{proof}

The next remark is due to the fact that to normalize a factorization over a Cartesian product of two cones, we can normalize the factorizations over each of these cones separately.

\begin{rem}
    Let us be given two regular convex cones $\mathcal C$ and $\mathcal D$, such that it is possible to normalize factorizations with respect to them with constants $f_{\mathcal C}$ and $f_{\mathcal D}$, respectively. Then, for the cone $\mathcal C \times \mathcal D$ it is possible to normalize factorizations with a constant $\max (f_{\mathcal C},  f_{\mathcal D})$. 
\end{rem}

\section{Normalization of Cones}
\label{sec:3}

In our normalization of the cones, we will construct the scaling automorphisms by utilizing a convex optimization
approach via some special classes of strictly convex functions whose domains are the interiors of regular convex cones
in consideration. We start with the so-called $\vartheta$-\emph{logarithmically homogeneous self-concordant barrier (LHSCB)} $F$
(see, Definition 2.3.2 in \cite{NN1994}). Given a function $f : \mathbb{R}^n \to \mathbb{R} \cup \{+\infty\}$, we define the \emph{Legendre-Fenchel conjugate} of $f$,
$f_*: \mathbb{R}^n \to \mathbb{R} \cup \{+\infty\}$
as
\[
f_*(v):=\sup_{x \in \mathbb{R}^n}\left\{-\scalPr{v}{x} -f(x)\right\}.
\]
In the next lemma, we collect some of the properties of $\vartheta$-LHSCB. 

\begin{lemma}
\label{lem:3.0}
Every regular convex cone $\mathcal{C} \subset \mathbb{R}^n$ admits a $\vartheta$-LHSCB $F$, where $1 \leq \vartheta \leq n$.
Moreover, every $\vartheta$-LHSCB $F$ for $\mathcal{C}$ satisfies the following:\\
\begin{enumerate}[(a)]
\item
\label{3.1.0}
For every sequence $\left\{x^{(k)}\right\} \subset \inte(\mathcal{C})$ such that $x^{(k)} \to \bar{x} \in \bd(\mathcal{C})$, we have
\[
F\left(x^{(k)}\right) \to +\infty \,\,\,\, \hspace{1cm} \textup{ and }  \hspace{1cm} \,\,\,\, \norm{\nabla F\left(x^{(k)}\right)} \to +\infty;
\]
\item
\label{3.1.1}
$\scalPr{-\nabla F(x)}{h}^2  \leq \vartheta \scalPr{\nabla^2 F(x)h}{h},$ for all  $x \in \inte(\mathcal{C})$ and for all $ h \in \R^n$;\\
\item
\label{3.1.2}
$\nabla^2 F(x) x = -\nabla F(x),$ for all $ x \in \inte(\mathcal{C})$;\\
\item
\label{3.1.3}
$\nabla F : \inte(\mathcal{C}) \to \inte(\mathcal{C^*})$ is a bijection and $\nabla F_*$ is its inverse;\\
\item
\label{3.1.4}
$\nabla^2 F_*(-\nabla F(x))  = \left[\nabla^2 F(x)\right]^{-1},$ for all
$ x \in \inte(\mathcal{C});$\\

\noindent
$\nabla^2 F(-\nabla F_*(u))  = \left[\nabla^2 F_*(u)\right]^{-1},$ for all
 $ u \in \inte\left(\mathcal{C}^*\right)$;\\
 \item
\label{3.1.5}
$\scalPr{\nabla^2 F(x) h}{h}^{1/2} \leq \scalPr{-\nabla F(x)}{h},$ for all $ x \in \inte(\mathcal{C})$ and for all $h \in \mathcal{C}$;\\
\item
\label{3.1.6}
$\scalPr{\left(\nabla^2 F(x)\right)^{-1} h}{h}^{1/2} \leq \scalPr{x}{h},$ for all $ x \in \inte(\mathcal{C}),$ for all $ h \in \mathcal{C}^*$.\\
\end{enumerate}
\end{lemma}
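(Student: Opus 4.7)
The statement is a compendium of foundational facts about logarithmically homogeneous self-concordant barriers, so my plan is to treat it as an organized citation exercise from the Nesterov--Nemirovski theory, deriving only the items whose short self-contained proofs illuminate the role of logarithmic homogeneity. For the existence in (a), I would invoke the universal barrier construction of~\cite{NN1994}, which produces an LHSCB with barrier parameter linear in $n$, and refine to $\vartheta \le n$ by the usual tightening (or alternatively the entropic barrier). The lower bound $\vartheta \ge 1$ I would get from the identity $\vartheta = \scalPr{-\nabla F(x)}{x}$ (itself a consequence of differentiating $F(tx) = F(x) - \vartheta \log t$ at $t=1$) combined with the fact that $-\nabla F(x) \in \inte(\mathcal{C}^\ast)$ and $x \in \inte(\mathcal{C})$, so $\scalPr{-\nabla F(x)}{x}>0$, together with the observation that halfspaces are the only cones where $\vartheta$ can drop to $1$.

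Items (\ref{3.1.0}) and (\ref{3.1.1}) are the barrier property and the defining LHSCB inequality; I would simply recall them from the definition. For (\ref{3.1.2}) the clean route is to differentiate the gradient relation $\nabla F(tx) = t^{-1}\nabla F(x)$ (obtained by differentiating $F(tx)=F(x)-\vartheta\log t$ in $x$) with respect to $t$ and evaluate at $t=1$. For (\ref{3.1.3}) and (\ref{3.1.4}) I would use standard Legendre--Fenchel calculus: strict convexity of $F$ on $\inte(\mathcal{C})$ and the boundary blow-up from (\ref{3.1.0}) together imply that $-\nabla F$ is a $C^1$ diffeomorphism between $\inte(\mathcal{C})$ and $\inte(\mathcal{C}^\ast)$ with inverse $\nabla F_\ast$, and the Hessian identities follow by differentiating this inverse relationship.

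The analytic heart of the lemma is (\ref{3.1.5}), from which (\ref{3.1.6}) follows by applying (\ref{3.1.5}) to the dual barrier $F_\ast$ on $\mathcal{C}^\ast$ and translating via (\ref{3.1.3})--(\ref{3.1.4}) (noting $x = \nabla F_\ast(-\nabla F(x))$ and the inverse Hessian identity). For (\ref{3.1.5}) itself I would fix $h \in \mathcal{C}$ and analyze the one-dimensional restriction $\phi(t) := F(x - th)$. It is a self-concordant barrier on the interval $[0,T)$ where $T \in (0,+\infty]$ is the exit time of the ray $x - th$ from $\mathcal{C}$, with $\phi'(0) = \scalPr{-\nabla F(x)}{h}$ and $\phi''(0) = \scalPr{\nabla^2 F(x)h}{h}$. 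The one-variable self-concordant barrier inequality on $[0,T)$ then yields $\phi''(0)^{1/2} \le \phi'(0)$, which is exactly (\ref{3.1.5}). The main obstacle here is the case $T=+\infty$ (which happens when $h$ lies in the lineality direction, i.e., the recession cone of the sublevel sets); I would address it by observing that in this case logarithmic homogeneity forces $\phi$ to be essentially linear plus the log-homogeneous correction, and a direct check using (\ref{3.1.2}) still gives the bound.
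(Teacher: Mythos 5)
The paper's own ``proof'' of this lemma is purely a list of citations (Nesterov--Nemirovski for existence and items (a)--(c), Nesterov--Todd and T\"un\c{c}el for (d)--(e), Nesterov--Todd 2016 / Nesterov 2018 for (f)--(g)), so your plan of treating it as a citation compendium with a few self-contained derivations is in the same spirit, and your sketches for (c), (d), (e) and the deduction of (g) from (f) via the dual barrier are the standard and correct arguments. However, the one item for which you attempt a substantive derivation, item (f), contains a flawed key step. You restrict $F$ to $\phi(t):=F(x-th)$ on $[0,T)$ and assert that ``the one-variable self-concordant barrier inequality on $[0,T)$'' yields $\phi''(0)^{1/2}\le\phi'(0)$. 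No such inequality holds for a general one-dimensional self-concordant barrier on a bounded interval: for $\phi(t)=-\log(T-t)-\log(t+T)$ on $(-T,T)$ one has $\phi'(0)=0$ while $\phi''(0)=2/T^2>0$. The inequality in (f) is not a generic barrier fact; it is specific to the situation where $h$ lies in the recession cone of the domain, so that the restriction of $F$ to the line $\{x+sh\}$ is defined on a half-line $(-T,+\infty)$ in $s$. The correct argument integrates the self-concordance inequality $|\psi'''|\le 2(\psi'')^{3/2}$ for $\psi(s):=F(x+sh)$ over $[0,\infty)$ to get $\psi'(s)\ge\psi'(0)+\psi''(0)^{1/2}-(\psi''(0)^{-1/2}+s)^{-1}$, and combines this with $\psi'(s)\le 0$ (which itself follows from the barrier inequality (b) applied along the recession direction) to conclude $\psi''(0)^{1/2}\le-\psi'(0)=\scalPr{-\nabla F(x)}{h}$. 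Your discussion of the ``main obstacle'' $T=+\infty$ is a red herring: since $\mathcal{C}$ is pointed, the ray $x-th$ always exits in finite time for $h\ne 0$; the essential feature you never invoke is the unboundedness of the domain in the \emph{opposite} direction.

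A secondary, smaller issue: your argument for $\vartheta\ge 1$ only establishes $\vartheta=\scalPr{-\nabla F(x)}{x}>0$, and the appended remark about halfspaces does not close the gap. The bound $\vartheta\ge 1$ is a general theorem about self-concordant barriers on proper convex domains (Nesterov--Nemirovski) and should simply be cited, as the paper does via Theorem~2.5.1 of \cite{NN1994} together with \cite{LM2021} for the upper bound $\vartheta\le n$.
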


\begin{proof}
The fact that every regular cone in $\mathbb{R}^n$ admits a $\vartheta$-LHSCB $F$, where $1 \leq \vartheta \leq n$, follows from Theorem~2.5.1 in~\cite{NN1994} and \cite{LM2021}. The properties \eqref{3.1.0} and \eqref{3.1.1} are essentially parts of a definition of $\vartheta$-LHSCB. For part \eqref{3.1.2}, see \cite[Proposition 2.3.4, eqn. (2.3.12)]{NN1994}. For parts \eqref{3.1.4} see \cite[eqn. (2.11)]{NT1997}. For part \eqref{3.1.3} see \cite[Proposition 5.1]{Tuncel1998}. For parts \eqref{3.1.5} and \eqref{3.1.6} see \cite[Theorem 1.1 and eqns. (1.17 and (1.18)]{NesterovT2016}, as well as \cite[Section 4]{Nesterov2018}.
\end{proof}

A regular convex cone $\mathcal{C}$ is called \emph{homogeneous} if the automorphism group of
$\mathcal{C}$ acts transitively on the interior of $\mathcal{C}$. That is, for every pair of interior points
$x,y$ of $\mathcal{C}$, there exists a nonsingular linear map $A$ such that $A(\mathcal{C})=\mathcal{C}$ and $Ax=y$. For a recent source on homogeneous cones, see~\cite{TuncelVandenberghe2022}.

A regular convex cone is called \emph{self-dual}, if there exists an inner product under which the cone is equal to its dual.

\subsection{Symmetric Cones}\label{subsec:symmetric_cones}

A regular convex cone is called \emph{symmetric} if it is homogeneous and self-dual. Let $\mathcal{C}$
be a symmetric cone. A \emph{$\vartheta$-self-scaled barrier} for $\mathcal{C}$ is a closed, at least three times continuously differentiable, strictly convex function whose
domain is the interior of $\mathcal{C}$, it is $\vartheta$-logarithmically homogeneous for $\vartheta \geq 1$, and has many additional properties
(see Definition 2.1 in \cite{NT1997} also see \cite{NT1998}). In the next lemma, we collect only those properties that we will need from self-scaled barriers in this paper.
 
\begin{lemma}
\label{lem:3.1}
Every symmetric cone $\mathcal{C} \subset \mathbb{R}^n$ admits a $\vartheta$-self-scaled barrier $F$, where $\vartheta$ is the Carath\'{e}odory number of $\mathcal{C}$.
Every $\vartheta$-self-scaled barrier $F$ for $\mathcal{C}$ is a $\vartheta$-LHSCB for $\mathcal{C}$.
Moreover, every $\vartheta$-self-scaled barrier $F$ for $\mathcal{C}$ satisfies the following:\\
\begin{enumerate}[(a)]
\item
\label{3.1.7}
$\nabla^2 F(x), \left[\nabla^2 F(x)\right]^{1/2} \in \aut(\mathcal{C})$ for all $ x \in \inte(\mathcal{C})$;\\
\item
\label{3.1.8}
for every $u \in \mathcal{C}$ the function $f_u: \inte(\mathcal{C}) \to \R$,
$f_u(x) := \scalPr{-\nabla F(x)}{u},$
is convex.\\
\end{enumerate}
\end{lemma}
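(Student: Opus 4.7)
The plan is to obtain each claim from the Nesterov--Todd self-scaled barrier theory (see \cite{NT1997, NT1998}) combined with the Jordan-algebraic description of symmetric cones. Concretely, one writes $\mathcal{C}$ as a Cartesian product of irreducible symmetric cones, each of which is the cone of squares of a Euclidean Jordan algebra, and on each factor uses the barrier $F(x) := -\log\det(x)$, where $\det$ denotes the Jordan determinant. Barrier parameters and Carath\'eodory numbers both add under Cartesian products, so the argument reduces to the irreducible case. Moreover, a result in \cite{NT1997} shows that every self-scaled barrier on a given symmetric cone coincides with this one up to additive affine terms, so verifying properties (a) and (b) for $-\log\det$ establishes them for every $\vartheta$-self-scaled barrier.

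First I would handle existence and the LHSCB statement together. It is established in \cite{NT1997} (and earlier in G\"uler's work via the characteristic function) that $-\log\det$ is a $\vartheta$-self-scaled barrier with $\vartheta$ equal to the rank of the underlying Jordan algebra, which for a symmetric cone is exactly its Carath\'eodory number. The definition of self-scaled barrier in \cite{NT1997} contains the $\vartheta$-LHSCB axioms with the same parameter $\vartheta$, so the second sentence of the lemma is immediate from the definitions.

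For part~(a), I would express the Hessian in Jordan-algebraic terms: for $F = -\log\det$ one has $-\nabla F(x) = x^{-1}$ (Jordan inverse) and $\nabla^2 F(x) = P(x^{-1})$, where $P$ is the quadratic representation. Because $x \in \inte(\mathcal{C})$ implies $x^{-1} \in \inte(\mathcal{C})$ and the quadratic representation of every interior point lies in $\aut(\mathcal{C})$ (a standard fact about symmetric cones), we get $\nabla^2 F(x) \in \aut(\mathcal{C})$. Applying the identity $P(y)^{1/2} = P(y^{1/2})$ with $y := x^{-1}$ then yields $[\nabla^2 F(x)]^{1/2} = P(x^{-1/2}) \in \aut(\mathcal{C})$.

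For part~(b), which I expect to be the main technical step, I would use Lemma~\ref{lem:3.0}\eqref{3.1.2} to write $\nabla f_u(x) = -\nabla^2 F(x)\,u$, so that $\nabla^2 f_u(x)$ is the third differential of $F$ contracted against $u$. In the PSD model, substituting $M := X^{-1/2} H X^{-1/2}$ and $N := X^{-1/2} U X^{-1/2}$ collapses the computation to
\[
\nabla^2 f_U(X)[H, H] \;=\; 2\,\mathrm{tr}(M^2 N) \;\geq\; 0,
\]
which is non-negative because $M^2 \succeq 0$ and $N \succeq 0$. The analogous identity for a general symmetric cone follows from the corresponding Jordan-algebraic computation with the quadratic representation, or equivalently from the operator convexity of the Jordan inverse on $\inte(\mathcal{C})$. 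The main obstacle is packaging this convexity argument cleanly in the general (non-PSD) symmetric case; the rest of the lemma amounts to direct appeals to the definition or to the self-scaled barrier literature.
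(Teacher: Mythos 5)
Your proposal is correct in substance but takes a genuinely different route from the paper. The paper's proof is purely citation-based and works \emph{axiomatically} with an arbitrary $\vartheta$-self-scaled barrier: existence with $\vartheta$ equal to the Carath\'{e}odory number from G\"uler--Tun\c{c}el and Hauser--G\"uler, part~(a) from Theorem~3.1(iii) of Nesterov--Todd, and part~(b) from Lemma~3.1 of Nesterov--Todd, which derives convexity of $x\mapsto\scalPr{-\nabla F(x)}{u}$ directly from the self-scaled axioms. You instead reduce to the canonical barrier $-\log\det$ on each irreducible factor of the Euclidean Jordan algebra and verify (a) and (b) by explicit computation with the quadratic representation (the identities $\nabla^2F(x)=P(x^{-1})$, $P(y)^{1/2}=P(y^{1/2})$, and the trace computation for (b) are all correct). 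What your route buys is concreteness; what it costs is that transferring from the canonical barrier to an \emph{arbitrary} self-scaled barrier requires the classification theorem, which is a deep result due to Hauser--G\"uler and Schmieta, not to \cite{NT1997} as you state, and which says that on each irreducible factor every self-scaled barrier is a scalar multiple $c\geq 1$ of $-\log\det$ plus an additive \emph{constant} --- not ``up to additive affine terms.'' This imprecision is harmless here, since (a) and (b) are invariant under positive scaling and additive constants, but you should cite the classification correctly. Finally, you flag the general-cone version of (b) as the main open obstacle; be aware that the axiomatic proof in Lemma~3.1 of \cite{NT1997} disposes of it for every self-scaled barrier without any Jordan-algebraic case analysis, so if you want a self-contained write-up the cleanest course is to quote that lemma rather than to complete the operator-convexity computation for all five families of irreducible symmetric cones.
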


\begin{proof}
It was shown in Section 3 of \cite{NT1997} that every symmetric cone $\mathcal{C}$ admits a $\vartheta$-self-scaled barrier, where $\vartheta = O(\dim(\mathcal{C}))$. Theorem 4.1 and Lemma 4.1 of \cite{GulerTuncel1998} combined with Theorem 5.5 of \cite{HG2002}, establish that every symmetric cone $\mathcal{C}$ admits a $\vartheta$-self-scaled barrier, where $\vartheta$ is the Carath\'{e}odory number of $\mathcal{C}$. 
By Definition 2.1 of \cite{NT1997}, every $\vartheta$-self-scaled barrier for $\mathcal{C}$ is a $\vartheta$-LHSCB for $\mathcal{C}$. Property \eqref{3.1.7} follows from Theorem 3.1 part (iii) of \cite{NT1997} (establishing that the Hessians yield automorphisms) and Theorem 4.1 from \cite{Tuncel1998} and Theorem 5.5 of \cite{HG2002}. Property \eqref{3.1.8} is Lemma 3.1 of \cite{NT1997}.
\end{proof}

\begin{thm}
\label{thm:symmetric_cones}
Let $\mathcal C\subseteq \R^n $ be a symmetric cone  and let $\mathcal{A}, \mathcal{B}$ be finite sets such that
$\mathcal{A}\subseteq \mathcal{C}\subseteq \R^n$, $\mathcal{B}\subseteq \mathcal{C}^\star\subseteq \R^n$ and that
$$
\scalPr{a}{b}\leq \Delta\quad\text{for all}\quad a\in \mathcal{A},\,b\in\mathcal{B}
$$ 
for some given $\Delta$, where $\mathcal{C}^* = \mathcal{C}$ under $\scalPr{\cdot}{\cdot}$. Further assume that
$\cone(\mathcal{A})$ intersects the interior of $\mathcal{C}$ and $\cone(\mathcal{B})$ intersects the interior of $\mathcal{C}^{\star}$. 
Then, there exists a self-adjoint automorphism $x\mapsto Lx$ of $\mathcal{C}$ such that  
$$
	\scalPr{La}{La}\leq \vartheta \Delta  \qquad \text{and} \qquad \scalPr{L^{-1}b}{L^{-1}b}\leq \vartheta\Delta \qquad\text{for all}\quad a\in\mathcal{A}, b\in \mathcal{B}\,,
$$
where $\vartheta$ is the Carath\'{e}odory number of $\mathcal C$.
\end{thm}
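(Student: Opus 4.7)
My plan is to construct $L$ as a positive scalar multiple of $[\nabla^2 F(x^*)]^{1/2}$, where $F$ is the $\vartheta$-self-scaled barrier supplied by Lemma~\ref{lem:3.1} and $x^*\in \inte(\mathcal C)$ is the minimizer of a suitable convex program. Property~\eqref{3.1.7} already guarantees that $[\nabla^2 F(x^*)]^{1/2}$ is a self-adjoint automorphism of $\mathcal C$, so the same holds for any positive multiple. Moreover, properties~\eqref{3.1.5}--\eqref{3.1.6} reduce the required norm bounds to linear estimates: if $y\in \inte(\mathcal C)$ and $L:=[\nabla^2 F(y)]^{1/2}$, then
\[
\scalPr{La}{La}=\scalPr{\nabla^2 F(y)a}{a}\le \scalPr{-\nabla F(y)}{a}^2,\qquad \scalPr{L^{-1}b}{L^{-1}b}\le \scalPr{y}{b}^2,
\]
so it suffices to find $y$ with $\scalPr{-\nabla F(y)}{a}\le \sqrt{\vartheta\Delta}$ and $\scalPr{y}{b}\le \sqrt{\vartheta\Delta}$ for all $a\in \mathcal A$ and $b\in \mathcal B$.

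To locate $y$, I would minimize
\[
\phi(x):=\max_{a\in \mathcal A}\scalPr{-\nabla F(x)}{a}\quad\text{subject to}\quad x\in \inte(\mathcal C),\ \scalPr{x}{b}\le 1\ \text{for all}\ b\in \mathcal B.
\]
The function $\phi$ is convex by property~\eqref{3.1.8}. A conic combination of $\mathcal B$ lying in $\inte(\mathcal C^\star)$ pairs positively with every nonzero point of $\mathcal C$, so the feasible set is compact. A conic combination of $\mathcal A$ lying in $\inte(\mathcal C)$, combined with property~\eqref{3.1.0}, forces $\phi$ to blow up at $\bd(\mathcal C)$. Hence $\phi$ attains its minimum at some $x^*\in \inte(\mathcal C)$.

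At $x^*$, KKT produces multipliers $\lambda_a\ge 0$ with $\sum_a \lambda_a=1$ (supported on the argmax defining $\phi(x^*)$) and $\mu_b\ge 0$ (supported on the active constraints), such that the subdifferential formula for $\phi$ together with the constraint-gradient pairing yield, with $\hat a:=\sum_a \lambda_a a\in \mathcal C$ and $\hat b:=\sum_b \mu_b b\in \mathcal C^\star$,
\[
\nabla^2 F(x^*)\hat a=\hat b.
\]
Pairing this identity with $x^*$ and invoking~\eqref{3.1.2} gives $\sum_b \mu_b=\scalPr{x^*}{\hat b}=\scalPr{-\nabla F(x^*)}{\hat a}=\phi(x^*)$, and the hypothesis $\scalPr{a}{b}\le \Delta$ therefore delivers $\scalPr{\hat a}{\hat b}\le \Delta\,\phi(x^*)$. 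On the other hand, $\scalPr{\hat a}{\hat b}=\scalPr{\nabla^2 F(x^*)\hat a}{\hat a}$, and the reverse Cauchy--Schwarz inequality~\eqref{3.1.1} applied with $h=\hat a$ yields $\phi(x^*)^2\le \vartheta\scalPr{\hat a}{\hat b}$. Combining the two inequalities produces $\phi(x^*)\le \vartheta\Delta$. Rescaling $y:=\sqrt{\vartheta\Delta}\,x^*$ then balances the two first-order bounds via logarithmic homogeneity and completes the construction.

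The main obstacle is pinpointing the right convex program: its objective must be convex via~\eqref{3.1.8}, its KKT certificate must align with the self-scaled identity $\nabla^2 F(x^*)\hat a=\hat b$ so that the two-sided squeeze on $\scalPr{\hat a}{\hat b}$ closes with the Carath\'{e}odory exponent $\vartheta$, and its feasible set must be compact while $\phi$ is coercive toward $\bd(\mathcal C)$. Once this alignment is in place, Slater's condition and the remaining verifications are routine under the interior assumptions on $\cone(\mathcal A)$ and $\cone(\mathcal B)$.
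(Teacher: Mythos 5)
Your proposal is correct and takes essentially the same route as the paper's proof: the same self-scaled barrier from Lemma~\ref{lem:3.1}, the same KKT identity $\nabla^2 F(x^*)\hat a=\hat b$, the same squeeze on $\scalPr{\hat a}{\hat b}$ using property~\eqref{3.1.1} and the hypothesis $\scalPr{a}{b}\le\Delta$, and the same passage to the norm bounds via properties~\eqref{3.1.5}--\eqref{3.1.6} and $L=[\nabla^2 F(\cdot)]^{1/2}$. The only difference is a cosmetic reparametrization of the convex program: the paper couples both constraint groups to a single level $t$, so the balancing $\sum_a\lambda_a=\sum_b\mu_b$ falls out of the multipliers, whereas you fix the dual constraints at level $1$ and restore the balance afterwards by logarithmic homogeneity.
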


\begin{proof} Let $\mathcal C\subseteq \R^n $ be a symmetric cone and $\mathcal{A}, \mathcal{B}$ be given as above. Suppose that
both of the sets $\cone(\mathcal{A}) \cap \inte(\mathcal{C})$, $\cone(\mathcal{B}) \cap \inte(\mathcal{C}^{\star})$ are nonempty. Let $F$ be a $\vartheta$-self-scaled barrier for $\mathcal C$, where $\vartheta$ is the Carath\'{e}odory number of $\mathcal{C}$ (such $F$ exists by Lemma~\ref{lem:3.1}).  Consider the following optimization problem, where $w \in \mathbb{R}^n$ and $t \in \mathbb{R}$ are variables.
\begin{equation}\tag{\textbf{SYM1}}\label{eq:program_SYM1}
\begin{aligned}
&&\inf \quad & t\\
&&\text{subject to}\\
&&&\scalPr{-\nabla F(w)}{a}\leq t\quad &&\text{for all}\quad a\in \mathcal A\\
&&&\scalPr{w}{b}\leq t &&\text{for all} \quad b\in \mathcal B\\
&&&w\in \interior \mathcal C\,.
\end{aligned}
\end{equation}

By Lemma~\ref{lem:3.1} part \eqref{3.1.8}, the solution set of each constraint in the first group of constraints is a convex set; the second set of constraints
define a polyhedron (in fact, a polyhedral cone), the third constraint is a convex set constraint and the objective function is linear. Therefore, the optimization problem~\eqref{eq:program_SYM1} is a convex optimization problem. 
The problem is not infeasible (pick any $w$ in the interior of $\mathcal{C}$ and set $t := 1+  \max \left\{\max_{a \in \mathcal{A}} \left\{ \scalPr{-\nabla F(w)}{a} \right\}, 
\max_{b \in \mathcal{B}}\left\{ \scalPr{w}{b} \right\}\right\}$) and is not unbounded (by definition of the dual cone, $\mathcal{A}$ and $\mathcal{B}$, $t$ is nonnegative
for every feasible solution). Hence, the optimal value is finite. 

\emph{Claim:}
The optimal value of~\eqref{eq:program_SYM1}
is attained.

\emph{Proof of Claim:}
 By Lemma~\ref{lem:3.0}, part \eqref{3.1.0}, $\norm{\nabla F\left(w^{(k)}\right)} \to + \infty$,
for every sequence in the interior of $\mathcal{C}$ converging to a boundary point of $\mathcal{C}$. Since $\cone(\mathcal{A}) \cap \inte(\mathcal{C}) \neq \emptyset$, due to the first group of constraints, for every sequence in the feasible region converging to a boundary point of $\mathcal{C}$, $t \to +\infty$. Thus, the optimal value can only be attained in the interior of $\mathcal{C}$. Hence, there exists a positive integer $M$ such that the closed set $\left\{\begin{pmatrix} w \\ t \end{pmatrix} \in \inte(\mathcal{C}) \oplus \mathbb{R}_+:  \,\, F(w) \leq M \right\}$ contains all minimizers of \eqref{eq:program_SYM1}. Since, in addition, $\cone(\mathcal{B}) \cap \inte(\mathcal{C}^{\star}) \neq \emptyset$, the set
\[
\left\{\begin{pmatrix} w \\ t \end{pmatrix} \in \inte(\mathcal{C}) \oplus \mathbb{R}_+: \scalPr{w}{b} \leq \overline{t} \,\, \forall b \in \mathcal{B}, \,\,\,\, t \leq \overline{t}, \,\,\,\, F(w) \leq M \right\},
\]
where $\overline{t}$ is the optimal value, is nonempty, compact and contains all optimal solutions. By the above arguments, we proved that the convex optimization problem \eqref{eq:program_SYM1} can be written equivalently as the minimization of a linear function over a nonempty, compact set. Therefore, this latter problem as well as \eqref{eq:program_SYM1} attain their optimal value in the interior of $\mathcal{C}$.
\hfill $\Diamond$

Note that the feasible solution mentioned before the above claim is a Slater point for \eqref{eq:program_SYM1}. Thus, by necessary (and sufficient) conditions for optimality, every optimal solution $\left(\overline{w}, \overline{t}\right)$ satisfies the following conditions for some $\lambda \in \mathbb{R}^{\mathcal{A}}$, and $\mu \in \mathbb{R}^{\mathcal{B}}$.
\begin{align}
&\nabla^2 F\left( \overline w\right) \left (\sum_{a\in \mathcal{A}}\lambda_a a \right)=\left (\sum_{b\in \mathcal{B}}\mu_b b \right)\label{eq:cond1}\\
&\sum_{a\in \mathcal{A}}\lambda_a+ \sum_{b\in \mathcal{B}}\mu_b=1 \label{eq:cond2}\\
&\scalPr{-\nabla F(\overline{w})}{a}\leq \overline{t}\quad &&\text{for all}\quad a\in \mathcal A \\
&\scalPr{\overline{w}}{b}\leq \overline{t}\quad&&\text{for all} \quad b\in \mathcal B \\
&\lambda_a\geq 0,\quad \mu_b\geq 0\quad&&\text{for all} \quad a\in\mathcal A\quad\text{and}\quad b\in \mathcal B \label{eq:cond5}\\
&\lambda_a> 0\implies \scalPr{-\nabla F(\overline{w})}{a}= \overline{t}\quad&&\text{for all} \quad a\in\mathcal A\label{eq:cond6}\\
&\mu_b> 0\implies\scalPr{\overline{w}}{b}= \overline{t}\quad&&\text{for all} \quad b\in \mathcal B \label{eq:cond7}\\
&\overline{w}\in \interior \mathcal C. \label{eq:cond8}
\end{align}

Let us define $a'$ and $b'$ as follows
\begin{align*}
&a':=\sum_{a\in \mathcal{A}}\lambda_a a \qquad &&\text{and}\qquad &&b':=\sum_{b\in \mathcal{B}}\mu_b b\,.
\end{align*}

Using \eqref{eq:cond8} and Lemma~\ref{lem:3.0} part \eqref{3.1.2} together with
~\eqref{eq:cond1} we have 
\begin{align*}
	\scalPr{-\nabla F(\overline{w})}{a'}=\scalPr{\nabla^2F(\overline{w}) \overline{w}}{a'}=\scalPr{\overline{w}}{\nabla^2F(\overline{w}) a'}=\scalPr{\overline{w}}{b'}\,.
\end{align*}

By~\eqref{eq:cond5},~\eqref{eq:cond6} and~\eqref{eq:cond7}, we get
$$
\scalPr{-\nabla F(\overline{w})}{a'}= \overline{t}\sum_{a\in\mathcal A} \lambda_a
$$
and 
$$
\scalPr{\overline{w}}{b'} =\overline{t} \sum_{b\in\mathcal B} \mu_b\,,
$$
and so
$\overline{t}\sum_{a\in\mathcal A} \lambda_a=\overline{t} \sum_{b\in\mathcal B} \mu_b$ and consequently by~\eqref{eq:cond2} we have 
$$\sum_{a\in\mathcal A} \lambda_a=\sum_{b\in\mathcal B} \mu_b=1\,.$$ 

Now, we consider another optimization problem (with $w$ and $\tau$ as variables) which gets us closer to finding a scaling:
\begin{equation}\tag{\textbf{SYM2}}\label{eq:program_SYM2}
\begin{aligned}
&&\inf & \quad\tau\\
&&\text{subject to}\\
&&&\scalPr{\nabla^2 F(w)a}{a}\leq \tau\quad &&\text{for all}\quad a\in \mathcal A\\
&&&\scalPr{\left(\nabla^2 F(w)\right)^{-1}b}{b}\leq \tau \quad &&\text{for all} \quad b\in \mathcal B\\
&&&w\in \interior \mathcal C\,.
\end{aligned}
\end{equation}
By Lemma~\ref{lem:3.0} parts \eqref{3.1.5} and \eqref{3.1.6}, $\left(\overline{w}, \overline{t}^2\right)$ is
a feasible solution to \eqref{eq:program_SYM2} with objective value $\overline{t}^2$.
By Lemma~\ref{lem:3.0} part~\eqref{3.1.1}, we have  
$$
	\vartheta \scalPr{\nabla^2 F(\overline{w})a'}{a'} \geq \scalPr{-\nabla F(\overline{w})}{a'}^2= \overline{t}^2\,.
$$
Since
$$
\scalPr{\nabla^2 F(\overline{w})a'}{a'}=\scalPr{b'}{a'}\leq  \Delta\,,
$$
we have that the objective value of the feasible solution $\left(\overline{w}, \overline{t}^2\right)$ of~\eqref{eq:program_SYM2} is at most  $\vartheta\Delta$.
To finish the proof, we define  $L$ as $\left(\nabla^2 F(\overline{w})\right) ^{1/2}$ and use Lemma~\ref{lem:3.1} part~\eqref{3.1.7}.

\end{proof}

The statement of the theorem is invariant under switching $\mathcal{C}$ and $\mathcal{C}^{\star}$. Indeed, the proof of the theorem
also has this primal-dual symmetry. For self-scaled barriers, for the choice of inner-product which results in $\mathcal{C}^{\star}=\mathcal{C}$,
$F$ and $F_*$ may only differ by a constant. As a result, the derivatives of $F$ and $F_*$ are the same.

The above theorem and its proof generalize to the case that $\mathcal{A}$ and $\mathcal{B}$ are not necessarily
finite sets, but they are bounded. Next, let us consider the simpler direction (specialization) when $\mathcal{A}=\{a\}$, $\mathcal{B} =\{b\}$
(both sets are singletons). If they satisfy the hypotheses of Theorem~\ref{thm:symmetric_cones}, then $a \in \inte(\mathcal{C})$ and $b \in \inte(\mathcal{C}^{\star})$.
In this case, the ``good'' feasible solution $(\overline{w},\overline{t})$ of the optimization problem
\eqref{eq:program_SYM2}
used in the proof above is determined by the unique solution of
\[
\nabla^2 F(w) a = b, \,\,\,\, w \in \inte(\mathcal{C}).
\]
This unique solution leads to $\nabla^2 F(\overline{w})$ which is called the \emph{Nesterov--Todd scaling} in the area of interior-point methods
in convex optimization. 

Our optimization problem is trying to find the smallest possible radius $\sqrt{\tau}$ of a pair of primal-dual ellipsoids
such that
\begin{itemize}
\item
ellipsoids are centred at the origin;
\item
primal ellipsoid contains $a$, the dual ellipsoid contains $b$;
\item
the primal ellipsoids are defined by the Hessians of a self-scaled barrier for $\mathcal{C}$
(and hence in each instance, the dual ellipsoid is defined by the inverse of the specific Hessian chosen for the
primal cone, due to the definition of the dual norm).
\end{itemize}

This optimization problem is a kind of a dual problem to \emph{Todd's largest primal-dual ellipsoids problem} \cite{Todd2009}.
Todd asks `` given interior points $a$ and $b$ for $\mathcal{C}$ and $\mathcal{C}^*$ respectively, what are the \emph{largest}
primal-dual ellipsoids centred at $a$ and $b$ respectively and contained in $\mathcal{C}$ and $\mathcal{C}^{\star}$ respectively?'' Here, ``largest'' means the geometric mean of the radii of the
primal and dual ellipsoids is maximized.  So, this problem aims to construct largest inner ellipsoidal approximations to primal and dual cones simultaneously, a very useful and fundamental object
in theory and algorithms for convex optimization. Todd~\cite{Todd2009} proves that $\overline{w}$ (mentioned above) yields an optimal solution for his problem.

Let $\kappa(\mathcal{C})$ denote the Carath\'{e}odory number and $\ell(\mathcal{C})$ denote the longest chain of nonempty faces of a regular convex cone $\mathcal{C}$. Then, $\kappa(\mathcal{C}) \leq \ell(\mathcal{C}) -1$ \cite{ItoL2017}. Interestingly, in Averkov~\cite{Averkov2019} as well as in Saunderson~\cite{Saunderson2020} where extension complexity of a regular convex cone in terms of products of "low-complexity" cones is studied the geometric measure of complexity $\ell(\mathcal{C})$ of the representing cones plays a key role. In the case of symmetric cones, $\mathcal{C}$, $\kappa(\mathcal{C}) = \ell(\mathcal{C})-1$.

Our results in this subsection and the next provide bounds on the normalization constant $f_{\mathcal C}$ which depend on the barrier parameter $\vartheta$. The latter is additive under products of cones while the former is bounded by the maximum of $f_{\mathcal C}$ (see, Remark 3). Therefore, our results in this subsection and the next are strongest, when they are applied to indecomposable cones.

\subsection{Homogeneous Cones}

In the next lemma we collect the properties that we will need from certain $\vartheta$-LHSCB barriers for homogeneous cones.

\begin{lemma}
\label{lem:4.1}
Every homogeneous cone $\mathcal{C} \subset \mathbb{R}^n$ admits a $\vartheta$-logarithmically homogeneous self-concordant barrier $F$, such that
\begin{enumerate}[(a)]
\item
\label{6.1}
$\vartheta \leq n$;\\
\item
\label{6.2}
there exists a real-valued function $g$ such that
\[
F(L(x)) =F(x) + g(L), \textup{ for every } L \in \aut(\mathcal{C});\\
\]
\item
\label{6.3}
for every $u \in \mathcal{C}$ the function $f_u: \inte(\mathcal{C}) \to \R$,
$f_u(x) := \scalPr{-\nabla F(x)}{u},$
is convex;\\
\item
\label{6.4}
for every $x \in \inte(\mathcal{C})$, there exists $L \in \aut(\mathcal{C})$ such that 
$\nabla^2 F(x)= L^{\star} L$.
\end{enumerate}
\end{lemma}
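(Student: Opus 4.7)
The plan is to take $F$ to be a barrier specifically adapted to the automorphism structure of $\mathcal C$, the natural candidate being the characteristic function barrier $F(x) := \log \int_{\mathcal C^{\star}} e^{-\langle x, y\rangle}\, dy$ of $\mathcal C$, or equivalently any $\aut(\mathcal C)$-equivariant LHSCB built via the Vinberg / $T$-algebra framework for homogeneous cones as in \cite{TuncelVandenberghe2022}. Property (a) is then immediate from Lemma~\ref{lem:3.0}.

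For (b) I would compute directly: for every $L \in \aut(\mathcal C)$ the adjoint $L^{\star}$ lies in $\aut(\mathcal C^{\star})$, and substituting $z = L^{\star} y$ in the integral defining the characteristic function yields
\[
\int_{\mathcal C^{\star}} e^{-\langle Lx,y\rangle}\, dy \;=\; \int_{\mathcal C^{\star}} e^{-\langle x, L^{\star} y\rangle}\, dy \;=\; |\det L|^{-1} \int_{\mathcal C^{\star}} e^{-\langle x, z\rangle}\, dz.
\]
Taking logarithms gives $F(Lx) = F(x) + g(L)$ with $g(L) := -\log |\det L|$.

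Property (d) will then follow by combining (b) with the transitivity of $\aut(\mathcal C)$ on $\inte(\mathcal C)$. Differentiating (b) twice in $x$ produces the transformation rule $\nabla^2 F(Lx) = L^{-\star} \nabla^2 F(x) L^{-1}$ for every $L \in \aut(\mathcal C)$. Fix a base point $\bar x \in \inte(\mathcal C)$ and a factorization $\nabla^2 F(\bar x) = M_0^{\star} M_0$ with $M_0 \in \aut(\mathcal C)$; such an $M_0$ is supplied either by the Cholesky-type factorization inherent in the $T$-algebra structure of $\mathcal C$ or, equivalently, by selecting $F$ from the canonical family on homogeneous cones so that $\nabla^2 F(\bar x)$ lies in the image of the map $L \mapsto L^{\star} L$ on $\aut(\mathcal C)$ (in the symmetric case this is exactly Lemma~\ref{lem:3.1}\eqref{3.1.7}). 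For an arbitrary $x \in \inte(\mathcal C)$, homogeneity supplies $L \in \aut(\mathcal C)$ with $L\bar x = x$, and the transformation rule gives $\nabla^2 F(x) = (M_0 L^{-1})^{\star} (M_0 L^{-1})$ with $M_0 L^{-1} \in \aut(\mathcal C)$.

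The main obstacle will be (c). Differentiating the definition of $f_u$ twice reduces the desired convexity to the inequality $D^3 F(x)[u,v,v] \leq 0$ for all $u \in \mathcal C$ and $v \in \R^n$; for symmetric cones this is Lemma~\ref{lem:3.1}\eqref{3.1.8}, but for general homogeneous cones it requires structural input beyond the automorphism equivariance used in (b) and (d). I would attempt to establish it either directly from the stochastic representation $-\nabla F(x) = \mathbb{E}_x[Y]$, with $Y$ distributed on $\mathcal C^{\star}$ with density proportional to $e^{-\langle x,y\rangle}$ (thereby converting (c) into a moment-style inequality on $\mathcal C^{\star}$), or by quoting the construction of the canonical barriers for homogeneous cones in \cite{TuncelVandenberghe2022}, where this convexity is designed into the barrier alongside (a), (b), and (d).
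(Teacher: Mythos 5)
Your proposal is essentially the paper's proof: the paper simply quotes parts (a)--(c) from G\"{u}ler (Section 8 and Theorem 6.1 of \cite{Guler1997}) and part (d) from Theorem 5.3 of \cite{TuncelVandenberghe2022}, and your choice of the characteristic-function barrier, the change-of-variables computation giving $g(L)=-\log|\det L|$, and the equivariance-plus-base-point reduction for (d) are exactly the content of those cited results. The one caveat is that (c) is the genuinely nontrivial part (the moment inequality $-D^3F(x)[u,v,v]\geq 0$ for $u\in\mathcal{C}$ does not follow from equivariance alone), and both of your sketched routes ultimately fall back on the same literature the paper cites, so your treatment there matches the paper's without adding a self-contained argument.
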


\begin{proof}
For parts \ref{6.1}, \ref{6.2} and \ref{6.3} see \cite[Section 8 and Theorem 6.1]{Guler1997}. For part \ref{6.4} see \cite[Theorem 5.3]{TuncelVandenberghe2022}.
\end{proof}

\begin{thm}
Let $\mathcal C\subset \R^n $ be a homogeneous cone.
Also let $\mathcal{A}$, $\mathcal{B}$ be finite sets such that $\mathcal{A}\subseteq \mathcal{C}$, $\mathcal{B}\subseteq \mathcal{C}^\star$ and
$$
\scalPr{a}{b}\leq \Delta\quad\text{for all}\quad a\in \mathcal{A},\,b\in\mathcal{B}
$$ 
for some given $\Delta$. Further assume that
$\cone(\mathcal{A})$ intersects the interior of $\mathcal{C}$ and $\cone(\mathcal{B})$ intersects the interior of $\mathcal{C}^{\star}$. 
Then, there exists an automorphism $x\mapsto Lx$ of $\mathcal{C}$ such that  
$$
	\scalPr{La}{La}\leq \vartheta \Delta  \qquad \text{and} \qquad \scalPr{L^{-\star}b}{L^{-\star} b}\leq \vartheta\Delta \qquad\text{for all}\quad a\in\mathcal{A}, b\in \mathcal{B}\,,
$$
where $L^{-\star}$ is inverse-adjoint to $L$, and $\vartheta$ is the parameter of a $\vartheta$-LHSCB satisfying the properties listed in
the statement of Lemma~\ref{lem:4.1}.\end{thm}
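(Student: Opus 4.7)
\emph{The plan.} I would mimic the proof of Theorem~\ref{thm:symmetric_cones} almost line for line, replacing the self-scaled barrier used there by a $\vartheta$-LHSCB $F$ for $\mathcal{C}$ satisfying the properties collected in Lemma~\ref{lem:4.1}, and altering only the last step where the scaling automorphism is read off from the Hessian. Every property used in the proof of Theorem~\ref{thm:symmetric_cones} remains available in this setting: convexity of $\scalPr{-\nabla F(x)}{u}$ for $u \in \mathcal{C}$ comes from Lemma~\ref{lem:4.1}\eqref{6.3}, and the remaining analytical ingredients (boundary blow-up of $\nabla F$, the identity $\nabla^2 F(x) x = -\nabla F(x)$, the Cauchy--Schwarz-type bound $\scalPr{-\nabla F(x)}{h}^2 \leq \vartheta \scalPr{\nabla^2 F(x) h}{h}$, and the primal/dual norm estimates) come from Lemma~\ref{lem:3.0}, which holds for any $\vartheta$-LHSCB.

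\emph{Key steps.} I would first solve the convex optimization problem
\[
\inf\{t : \scalPr{-\nabla F(w)}{a} \leq t \ \forall\, a \in \mathcal{A},\ \scalPr{w}{b} \leq t \ \forall\, b \in \mathcal{B},\ w \in \inte(\mathcal{C})\}.
\]
Feasibility, nonnegativity of the optimum, and attainment at some $\overline{w} \in \inte(\mathcal{C})$ with optimal value $\overline{t}$ all carry over verbatim from Theorem~\ref{thm:symmetric_cones}, using the hypotheses $\cone(\mathcal{A}) \cap \inte(\mathcal{C}) \neq \emptyset$ and $\cone(\mathcal{B}) \cap \inte(\mathcal{C}^\star) \neq \emptyset$. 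Slater's condition and the KKT conditions yield multipliers $\lambda \geq 0$, $\mu \geq 0$; combining them with $\nabla^2 F(\overline{w}) \overline{w} = -\nabla F(\overline{w})$ forces $\sum_a \lambda_a = \sum_b \mu_b = 1$, so $a' := \sum_a \lambda_a a$ and $b' := \sum_b \mu_b b$ are convex combinations. Passing to the companion program
\[
\inf\{\tau : \scalPr{\nabla^2 F(w) a}{a} \leq \tau \ \forall\, a \in \mathcal{A},\ \scalPr{[\nabla^2 F(w)]^{-1} b}{b} \leq \tau \ \forall\, b \in \mathcal{B},\ w \in \inte(\mathcal{C})\},
\]
the feasibility of $(\overline{w}, \overline{t}^{\,2})$ follows from the primal/dual norm estimates in Lemma~\ref{lem:3.0}, and the chain
\[
\overline{t}^{\,2} = \scalPr{-\nabla F(\overline{w})}{a'}^{2} \leq \vartheta \scalPr{\nabla^2 F(\overline{w}) a'}{a'} = \vartheta \scalPr{a'}{b'} \leq \vartheta \Delta
\]
yields $\scalPr{\nabla^2 F(\overline{w}) a}{a} \leq \vartheta \Delta$ for every $a \in \mathcal{A}$ and $\scalPr{[\nabla^2 F(\overline{w})]^{-1} b}{b} \leq \vartheta \Delta$ for every $b \in \mathcal{B}$.

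\emph{Main obstacle and resolution.} The only step that genuinely requires a new ingredient is the extraction of the scaling automorphism from $\nabla^2 F(\overline{w})$. In the symmetric case the self-adjoint square root $[\nabla^2 F(\overline{w})]^{1/2}$ itself lies in $\aut(\mathcal{C})$ by Lemma~\ref{lem:3.1}\eqref{3.1.7}, but for a general homogeneous cone the square root of a Hessian is typically \emph{not} an automorphism, and it is \emph{a priori} unclear what should replace it; this is where the argument could fail. The fix is exactly Lemma~\ref{lem:4.1}\eqref{6.4}: there exists $L \in \aut(\mathcal{C})$, no longer self-adjoint in general, with $\nabla^2 F(\overline{w}) = L^{\star} L$. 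With this $L$,
\[
\scalPr{La}{La} = \scalPr{L^{\star} L a}{a} = \scalPr{\nabla^2 F(\overline{w}) a}{a} \leq \vartheta \Delta,
\]
\[
\scalPr{L^{-\star} b}{L^{-\star} b} = \scalPr{(L^{\star} L)^{-1} b}{b} = \scalPr{[\nabla^2 F(\overline{w})]^{-1} b}{b} \leq \vartheta \Delta,
\]
for all $a \in \mathcal{A}$ and $b \in \mathcal{B}$, which is precisely the conclusion of the theorem and makes transparent why the homogeneous-cone statement uses $L^{-\star}$ in place of $L^{-1}$.
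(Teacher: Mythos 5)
Your proposal is correct and follows essentially the same route as the paper: the paper likewise sets up the analogues (HOM1) and (HOM2) of the two programs from Theorem~\ref{thm:symmetric_cones}, invokes Lemma~\ref{lem:4.1} for the convexity of the first group of constraints and reruns the same analysis to bound the optimal value of the second program by $\vartheta\Delta$, and then extracts the automorphism from the factorization $\nabla^2 F(\overline{w}) = L^{\star}L$ of Lemma~\ref{lem:4.1}\eqref{6.4}. You have in fact spelled out the verification more explicitly than the paper, which simply states that the same analysis applies.
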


\begin{proof}

Let $F$ be a $\vartheta$-LHSCB for $\mathcal C$ satisfying the hypothesis (such $F$ exists by Lemma~\ref{lem:4.1}). Again by Lemma~\ref{lem:4.1} for every point $w \in \interior \mathcal C$, $\nabla^2 F(w)$ can be represented as $L^\star L$ such that  the linear map $x \mapsto L x$ defines an automorphism of $\mathcal C$. Similarly, $\left(\nabla^2 F(W)\right) ^{-1}$ can be represented as $L^{-1} L^{-\star}$ where the linear map $x \mapsto L^{-\star} x$ defines an automorphism of $\mathcal C^\star$.

Similarly, to the proof of Theorem~\ref{thm:symmetric_cones} we can write the optimization problems in variables $(w,t)$ and $(w, \tau)$ respectively:
\begin{equation}\tag{\textbf{HOM1}}\label{eq:program_HOM1}
\begin{aligned}
&&\inf &\quad t\\
&&\text{subject to}\\
&&&\scalPr{-\nabla F(w)}{a}\leq t\quad &&\text{for all}\quad a\in \mathcal A\\
&&&\scalPr{w}{b}\leq t &&\text{for all} \quad b\in \mathcal B\\
&&&w\in \interior \mathcal C\,
\end{aligned}
\end{equation}
and 
\begin{equation}\tag{\textbf{HOM2}}\label{eq:program_HOM2}
\begin{aligned}
&&\inf & \quad\tau\\
&&\text{subject to}\\
&&&\scalPr{\nabla^2 F(w)a}{a}\leq \tau\quad &&\text{for all}\quad a\in \mathcal A\\
&&&\scalPr{\left(\nabla^2 F(w)\right)^{-1}b}{b}\leq \tau\quad &&\text{for all} \quad b\in \mathcal B\\
&&&w\in \interior \mathcal C\,.
\end{aligned}
\end{equation}

Also in the case of a homogeneous cone, the program~\eqref{eq:program_HOM2} is convex (this time by Lemma~\ref{lem:4.1}) and therefore we can apply the same analysis as in Theorem~\ref{thm:symmetric_cones} to show that the optimal objective value of ~\eqref{eq:program_HOM2} is bounded above by $\vartheta \Delta $.
\end{proof}

The following example demonstrates that the normalization through automorphisms is not applicable to some non-homogeneous  cones.

\begin{example}

Consider the cone $\mathcal C:=\{x\in \R^3\,:\, x_1^2+x_2^2\leq x_3^2\,, x_1\geq 0\,, x_3\geq 0\}$.
\begin{center}
	\begin{tikzpicture}
 		\draw[thick] (0,0) coordinate(A)  arc (-120:-15:2cm and 1cm) coordinate(B);
		\draw[thick]  (0,0)  arc (-120:60:2cm and 1cm) coordinate(C);
		\draw[thick]  ($0.5*(A)+0.5*(C)+(0,-3.5)$) coordinate(O)--(A);
		\draw[thick]  (O)--(B);
		\draw[thick, dashed] (O)--(C);
		\draw[thick]  (A)--(C);
		\draw[->,thin] (O)--($(4,0)+(O)$) node[right]{$x_1$};
		\draw[->,thin] (O)--($(0,5)+(O)$) node[left]{$x_3$};
		\draw[->,thin] (O)--($(O)+1.3*(C)-1.3*(A)$) node[right]{$x_2$};
	\end{tikzpicture}
\end{center}
The dual cone $\mathcal C^\star$  is not facially exposed and therefore is not homogeneous~\cite{Anh2004}. So the results in this section are not applicable to the cone $\mathcal C$. Note however that
$\mathcal{C}$ is a hyperbolic cone (intersection of two well-known hyperbolic cones: a closed half-space and a three-dimensional second-order cone) and its dual $\mathcal{C}^*$ is not~\cite[Example 4]{ChuaT2008}. 

Let us show that no appropriate rescaling is possible for $\mathcal C$ and $\mathcal C^\star$ in some cases. Automorphisms of the cone $\mathcal C$ can be parametrized by $\alpha, \beta\in \R$, $\alpha >0$ as follows
$$
x\mapsto  \alpha
\begin{bmatrix}
     1 & 0 & 0\\ 
     0 & \sqrt{\beta^2+1} & \beta\\
     0 & \beta & \sqrt{\beta^2+1} \\ 
   \end{bmatrix} x\qquad\text{and}\qquad
   x\mapsto  \alpha
\begin{bmatrix}
     1 & 0 & 0\\ 
     0 & -\sqrt{\beta^2+1} &-\beta\\
     0 & \beta & \sqrt{\beta^2+1} \\ 
   \end{bmatrix} x
$$
with the corresponding automorphisms of the dual cone $\mathcal C^\star$
$$
x\mapsto  \frac{1}{\alpha}
\begin{bmatrix}
     1 & 0 & 0\\ 
     0 & \sqrt{\beta^2+1} & -\beta\\
     0 & -\beta & \sqrt{\beta^2+1} \\ 
   \end{bmatrix} x
   \qquad\text{and}\qquad
   x\mapsto \frac{1}{\alpha}
\begin{bmatrix}
     1 & 0 & 0\\ 
     0 & -\sqrt{\beta^2+1} & -\beta\\
     0 & \beta & \sqrt{\beta^2+1} \\ 
   \end{bmatrix} x\,.
$$

Let us define sets $\mathcal A$ and $\mathcal B$ as the sets consisting of single elements  $(M, 0, M)$ and $(-M, 0, M)$, respectively, where $M$ is a large number. It is straightforward to check that no suitable rescaling of these $\mathcal A$ and $\mathcal B$ is possible through the above pairs of automorphisms. Therefore, the normalization results of this section cannot be generalized to hyperbolic cones.

\end{example}

Building upon the literature, we reference Remark 2 by Averkov et al.~\cite{Averkov2018}, which highlighted the critical role of automorphisms of the positive semidefinite cone in their proof. Our results in the preceding and current sections further demonstrate that automorphisms continue to play a significant role in more general convex cone settings.

\section{Rescaling of Cones and Extension Complexity}

This section is dedicated to the implications of factorization normalizations on the expressive power of cones. The expressive power of a cone $\mathcal C\subseteq \mathcal R^n$ will be reflected in the pairs $\mathcal V$, $\mathcal F$  that admit a factorization over the cone $\mathcal C$, where  $\langle v, f\rangle\geq 0$ for every $v\in \mathcal V$ and $f\in \mathcal F$. In other words, we are interested in the pairs $\mathcal V$, $\mathcal F$ such that  for every $v\in \mathcal V$ and for every $f\in \mathcal F$ there are $a_v\in \mathcal C$ and $b_f \in \mathcal C^\star$ with $\langle v, f\rangle=\langle a_v, b_f\rangle $ for every $v\in \mathcal V$ and for every $f\in \mathcal F$.

The intuition behind the study of limitations in the expressive power of a cone $\mathcal C$ is as follows. First, we consider a specific collection of pairs  $\mathcal V$, $\mathcal F$, where  $\langle v, f\rangle\geq 0$ for every $v\in \mathcal V$ and $f\in \mathcal F$. We plan to prove that some of the pairs in the collection cannot be factorized using the cone $\mathcal C$. Second, for the sake of argument, let us consider a pair $\mathcal V$, $\mathcal F$ that can be factorized using the cone $\mathcal C$. Consider the sets 
\[
\mathcal A:=\left\{a_v\,:\, v\in \mathcal V\right\}\qquad \text{and}\qquad \mathcal B:=\left\{b_f\,:\, f\in \mathcal F\right\}\,.
\]
We show that there exists a set $\Gamma$ (which does not depend on $\mathcal V$ and $\mathcal F$)  and there exist at most $2n$ vectors in $\Gamma$, such that  the whole pair $\mathcal V$ and $\mathcal F$ can be identified from these $2n$ vectors in $\Gamma$. The $2n$ vectors in $\Gamma$ are obtained by carefully selecting a set of at most $2n$ vectors from $\mathcal B$ and then for each of them picking a ``closest"  vector from $\Gamma$.
Third, the cardinality of the constructed set $\Gamma$ depends only on the cone $\mathcal C$, in particular, it depends only on the dimension of its ambient space $n$ and the normalization constant $f_{\mathcal C}$. Now using the counting argument, for a sufficiently large collection of pairs $\mathcal V$, $\mathcal F$ we would not be able to identify each of them using at most $2n$ vectors from $\Gamma$. This leads to the statement that the cone $\mathcal C$ does not have enough of expressive power to capture all of the studied pairs $\mathcal V$, $\mathcal F$.

\subsection{Relationship to Polytopes}

Let us now consider how the pairs $\mathcal V$, $\mathcal F$ appear in the context of polytopes. We assume that a set $\mathcal V$ is a subset of $\mathcal P\subseteq \mathcal R^d$ and $\mathcal F$ is a subset of $\mathcal H\subseteq\mathcal R^d$. For example, $\mathcal P$ can correspond to the set of all $0/1$ points in $\mathcal R^{d-1}$ and $\mathcal H$ can correspond to the set of all possible hyperplanes spanned by $d-1$ affinely independent $0/1$ points in $\mathcal R^{d-1}$. 
The next theorem corresponds to representing  the convex hull of $\mathcal{X}\subseteq \{0,1\}^d$ through its outer and inner descriptions, i.e. once as a set of linear inequalities, say $\mathcal F$, and as a set of its extreme points, say $\mathcal V$. The upper bound on $M$ in the below theorem can be obtained for example through a straightforward application of Cramer's rule.

\begin{thm}\label{thm:01polytopes}
Given a nonnegative integer $d$, there is a way to assign to each $\mathcal{X}\subseteq \{0,1\}^{d-1}$ a unique $\mathcal V \subseteq \mathcal \{0,1\}^{d-1}\times\{1\}$ and a unique $\mathcal F\subseteq \Z^{d}$ such that
\begin{enumerate}[(i)]
\item $\|f\|_\infty\leq M$, $\|v\|_\infty\leq 1$ for all  $v\in {\mathcal V}$ and $f\in {\mathcal F}$;
\item $\langle v,\, f\rangle \geq 0$ for all  $v\in {\mathcal V}$ and $f\in {\mathcal F}$;
\item for all $v\in \{0,1\}^{d-1}\times\{1\}$, $v\notin\mathcal V$ there exists $f\in \mathcal F$  such that $\langle v,\, f\rangle \leq -1$,
\end{enumerate}
where $M=2^{d\log(2d)}$.
\end{thm}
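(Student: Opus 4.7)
The plan is to set $\mathcal V := \mathcal X \times \{1\}$ and to take $\mathcal F$ to be the homogenized coefficient vectors of a canonical primitive-integer description of the polytope $P := \conv(\mathcal X) \subseteq \R^{d-1}$. Concretely, let $\{\langle c_i, x\rangle = e_i : i \in E\}$ be a canonical primitive-integer system of equations defining $\mathrm{aff}(P)$ (for instance obtained via Hermite normal form), and let $\{\langle a_j, x\rangle \leq b_j : j \in I\}$ be the facet-defining inequalities of $P$ within $\mathrm{aff}(P)$, each scaled to primitive integer coefficients. Define
\[
\mathcal F := \{(-a_j, b_j) : j \in I\} \cup \{(-c_i, e_i),\,(c_i,-e_i) : i \in E\}.
\]
Both $\mathcal V$ and $\mathcal F$ are then canonical functions of $\mathcal X$, yielding the required assignment.

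Property (ii) is clear: every $f \in \mathcal F$ corresponds to an inequality valid on $P$, so for $v=(x,1)\in \mathcal V$ one has $\langle v,f\rangle = b - \langle a,x\rangle \geq 0$. For property (iii), take any $v'=(x',1)$ with $x'\in \{0,1\}^{d-1}\setminus \mathcal X$. Since every 0/1 vector is a vertex of the cube $[0,1]^{d-1}$ and cannot be a nontrivial convex combination of other 0/1 vectors, no 0/1 point outside $\mathcal X$ lies in $\conv(\mathcal X) = P$, and hence $x'\notin P$. Therefore $x'$ violates either some equation $\langle c_i,x\rangle = e_i$ or some facet inequality $\langle a_j,x\rangle \leq b_j$, and because all coefficients and $x'$ are integral, the magnitude of the violation is at least $1$; this translates to $\langle v',f\rangle \leq -1$ for the corresponding $f\in\mathcal F$.

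The bound $\|v\|_\infty \leq 1$ in (i) is immediate. For the bound $\|f\|_\infty \leq M$, the plan is to apply Cramer's rule: each facet-defining inequality is spanned by at most $d-1$ affinely independent points of $\mathcal X \subseteq \{0,1\}^{d-1}$, so its primitive integer coefficient-offset vector can be chosen so that each entry is, up to sign, a $(d-1)\times(d-1)$ minor of a matrix whose rows lie in $\{0,1\}^{d-1}\times\{1\}$. Hadamard's inequality bounds each such minor by $(d-1)^{(d-1)/2}$, and the same type of estimate holds for the canonical affine-hull equations. Since
\[
(d-1)^{(d-1)/2} \leq d^{d/2} = 2^{(d/2)\log d} \leq 2^{d\log(2d)} = M,
\]
property (i) follows.

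The main technical point I anticipate is making the construction unambiguous when $P$ is not full-dimensional, which requires a canonical primitive integer basis of $\mathrm{aff}(P)$ (Hermite normal form being a standard choice) together with the verification that these basis vectors also satisfy the Hadamard bound. Apart from this, the argument reduces to a routine Cramer's-rule computation coupled with the elementary observation that 0/1 vectors are vertices of the cube.
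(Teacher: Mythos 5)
Your construction is exactly the one the paper intends: $\mathcal V$ is the homogenized vertex set $\mathcal X\times\{1\}$, $\mathcal F$ is a canonical primitive-integer outer description of $\conv(\mathcal X)$, integrality of the data gives the slack of $-1$ in (iii), and Cramer's rule plus Hadamard's inequality gives $\|f\|_\infty\leq (d-1)^{(d-1)/2}\leq 2^{d\log(2d)}$ --- the paper itself offers only this one-line sketch, so your write-up is if anything more complete. The only loose end is the degenerate case $\mathcal X=\emptyset$, where your $\mathcal F$ would be empty and (iii) would fail; this is repaired by setting $\mathcal F:=\{(0,\dots,0,-1)\}$ there.
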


Similarly, the next theorem corresponds to outer and inner descriptions of cyclic polytopes. We can use Lemma 5 in~\cite{Fiorini12} to estimate the coefficients in an outer description of a cyclic polytope, i.e., to obtain an upper bound on $M$.

\begin{thm}\label{thm:cyclic}
Given a nonnegative integer $d$, there is a way to assign to each  subset $\mathcal{X}$ of the set $ \left\{(k,k^2,\ldots, k^{d-1})\,:\, k\in \{0, \ldots,t\}\right\}$ a unique $\mathcal V \subseteq \mathcal \Z^{d-1} \times \{1\}$ and a unique $\mathcal F\subseteq \Z^{d}$ such that
\begin{enumerate}[(i)]
\item $\|f\|_\infty\leq M$, $\|v\|_\infty\leq t^{d-1}$ for all  $v\in {\mathcal V}$ and $f\in {\mathcal F}$;
\item $\langle v,\, f\rangle \geq 0$ for all  $v\in {\mathcal V}$ and $f\in {\mathcal F}$;
\item for all $v\in \mathcal \Z^{d-1}\times\{1\}$, $v\notin\mathcal V$ there exists $f\in \mathcal F$  such that $\langle v,\, f\rangle \leq -1$,
\end{enumerate}
where $M=((d+1)t^d)^d$.
\end{thm}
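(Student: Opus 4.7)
The plan is to mirror the homogenization strategy behind Theorem~\ref{thm:01polytopes}, but with the cyclic polytope $\conv(\mathcal X)$ replacing the $0/1$ polytope. Given $\mathcal X\subseteq\{(k,k^2,\ldots,k^{d-1}):k\in\{0,\ldots,t\}\}$, I will set $\tilde{\mathcal X}:=\mathcal X\times\{1\}\subseteq\Z^d$ and work inside the cone $K_{\mathcal X}:=\cone(\tilde{\mathcal X})\subseteq\R^d$. Define
\[
    \mathcal V := K_{\mathcal X}\cap\bigl(\Z^{d-1}\times\{1\}\bigr),
\]
i.e.\ the integer lattice points of the homogenized polytope, and take $\mathcal F$ to be a set of integer, inward-pointing facet normals of $K_{\mathcal X}$. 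Each such facet is spanned by $d-1$ points of $\tilde{\mathcal X}$ in general position (by the Vandermonde structure of the moment curve), so stacking them as rows of a $(d-1)\times d$ matrix $V$ yields a one-dimensional null space; I would pick the generator $f$ whose $j$-th entry is the signed minor $(-1)^{j+1}\det(V^{(j)})$ (with a global sign so that $\langle\tilde x,f\rangle\geq 0$ on $\tilde{\mathcal X}$). When $\mathcal X$ is so small that $K_{\mathcal X}$ is not full-dimensional, I would augment $\mathcal F$ by $\pm$ vectors from an integer basis of $\linspan(\tilde{\mathcal X})^{\perp}$ to encode the affine constraints as pairs of inequalities.

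With these definitions, conditions (ii) and (iii) drop out essentially for free. Every $v\in\mathcal V$ lies in $K_{\mathcal X}$, so $\langle v,f\rangle\geq 0$ for each $f\in\mathcal F$, giving (ii). For (iii), any $v\in\Z^{d-1}\times\{1\}$ outside $\mathcal V$ is outside $K_{\mathcal X}$, hence violates some facet inequality $\langle v,f\rangle\geq 0$; since $v$ and $f$ are both integer, a strict violation forces $\langle v,f\rangle\leq -1$. The bound $\|v\|_\infty\leq t^{d-1}$ is immediate: every $v\in\mathcal V$ is a convex combination of elements of $\tilde{\mathcal X}$, whose entries $k^j$ satisfy $|k^j|\leq t^{d-1}$ for $k\in\{0,\ldots,t\}$ and $j\leq d-1$.

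The only real work is the coefficient bound $\|f\|_\infty\leq M$, and this is where I expect the main obstacle to sit. The entries of $f$ are $(d-1)\times(d-1)$ determinants whose entries are moment-curve coordinates bounded by $t^{d-1}$. Either directly via Leibniz's formula (giving $\lvert\det V^{(j)}\rvert\leq (d-1)!\,t^{(d-1)^2}$, which is comfortably below $M=(d+1)^d t^{d^2}$ for all $d,t\geq 1$) or by invoking Lemma~5 of~\cite{Fiorini12}, one obtains the stated bound $\|f\|_\infty\leq M$. The extra inequalities needed in the affinely-deficient case arise from even smaller integer minors and fit within the same bound. The bookkeeping in this degenerate case is the only genuine subtlety, and it is handled uniformly by writing each missing affine equality as two opposing $\geq 0$ inequalities.
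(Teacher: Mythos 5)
Your proposal is correct and follows exactly the route the paper intends (the paper gives no explicit proof, only the remark that the theorem encodes the inner and outer descriptions of the homogenized cyclic polytope, with the bound on $M$ obtained from Lemma~5 of~\cite{Fiorini12} or, equivalently, from Cramer's rule applied to the Vandermonde-type facet systems, as you do). Your choice of $\mathcal V$ as \emph{all} lattice points of $K_{\mathcal X}$ at height one is the right one for condition (iii), and the only spot deserving a little extra care --- producing small integer normals in the affinely-deficient case --- is handled, as you suggest, by the same minor/Cramer computation after extending the spanning points to a basis of $\R^d$ by unit vectors.
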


\subsection{Compact Encoding }
In this section, we show Lemma~\ref{rounding} that guarantees that under some conditions, the pairs $\mathcal V$ and $\mathcal F$ can be compactly encoded. Before stating and proving Lemma~\ref{rounding} we need some technical result about ``nets". The next lemma gives us a way to round the vectors in one of the factorization parts such that the rounded vectors lie in a set $\Gamma$, where $\Gamma$ depends only on the cone $\mathcal C$ and the set $\Gamma$ has an appropriate bound on its cardinality. Later, $\rho$ in the lemma will be a bound on the norms of the vectors in the factorization over the cone $\mathcal C$ and its dual and; $\epsilon$ will indicate the precision of rounding.  The lemma below follows from a classical result of Rogers~\cite{Rogers1963}, also see~\cite{Verger-Gaugry2005}. The lemma is used for going from exact factorizations to ``approximate" factorizations, where one of the factors can use a bounded number of vectors. 

\begin{lemma}\label{epsilon-net}
    Fix a dimension $n\geq 3$, radius $\rho > 0$, and $\epsilon > 0$ such that  $\rho/\epsilon \geq n$. Then there exists a set $ \Gamma:= \Gamma(n,\rho,\epsilon) \subseteq \R^n$ such that
   \begin{enumerate}[(i)]
        \item \label{epsilon-net.1} for all $u \in B_0(\rho):=\{x\in \R^n\,:\, \|x\|_2\leq \rho\}$ there is $\ov{u} \in \Gamma$ such that $\|u - \ov u\|_2 \leq \epsilon$, and
        \item \label{epsilon-net.2} $
|\Gamma| \leq \exp(1)(n\ln n+n \ln\ln n +5 n) \left({\rho}/{\epsilon}\right)^n
$.
    \end{enumerate}
\end{lemma}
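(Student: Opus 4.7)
The plan is to reduce the lemma to the classical covering-density theorem of Rogers: for every $n \geq 3$, $\R^n$ admits a \emph{lattice} covering by closed balls of radius $\epsilon$ whose density is at most $\theta_n := n \ln n + n \ln \ln n + 5n$. Concretely, there is a lattice $L \subset \R^n$ with $\bigcup_{\lambda \in L}(\lambda + B_0(\epsilon)) = \R^n$ and with fundamental volume $\det(L) \geq \vol(B_0(\epsilon))/\theta_n$. I would consider a random translate $L + t$ of such a Rogers lattice and keep only those centers whose $\epsilon$-balls actually intersect $B_0(\rho)$.

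The first step is a simple averaging argument: for any measurable set $S \subset \R^n$, translation invariance gives $\mathbb{E}_t\, |(L+t) \cap S| = \vol(S)/\det(L)$ when $t$ is uniform on a fundamental domain of $L$. Taking $S := B_0(\rho + \epsilon)$ yields
\[
\mathbb{E}_t\,|(L+t) \cap B_0(\rho + \epsilon)| \leq \theta_n \cdot \frac{\vol(B_0(\rho+\epsilon))}{\vol(B_0(\epsilon))} = \theta_n \left(\frac{\rho+\epsilon}{\epsilon}\right)^n,
\]
so some specific translate $t^*$ realizes at most this many points deterministically, and I set $\Gamma := (L + t^*) \cap B_0(\rho + \epsilon)$.

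Property (\ref{epsilon-net.1}) is then immediate because $L + t^*$ still induces a covering of $\R^n$ by $\epsilon$-balls: given $u \in B_0(\rho)$, pick $\lambda \in L + t^*$ with $\|u - \lambda\|_2 \leq \epsilon$; the triangle inequality forces $\|\lambda\|_2 \leq \rho + \epsilon$, so $\lambda \in \Gamma$. Centers outside $B_0(\rho + \epsilon)$ are irrelevant since their $\epsilon$-balls miss $B_0(\rho)$ entirely, so discarding them is harmless. The cardinality bound in (\ref{epsilon-net.2}) then follows by absorbing the boundary correction using the hypothesis $\rho/\epsilon \geq n$:
\[
\left(\frac{\rho+\epsilon}{\epsilon}\right)^n = \left(\frac{\rho}{\epsilon}\right)^n \left(1 + \frac{\epsilon}{\rho}\right)^n \leq \left(\frac{\rho}{\epsilon}\right)^n \exp\!\left(\frac{n\epsilon}{\rho}\right) \leq \exp(1)\left(\frac{\rho}{\epsilon}\right)^n,
\]
and multiplying by $\theta_n = n\ln n + n\ln\ln n + 5n$ yields the claim.

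The only substantive ingredient is invoking Rogers' lattice-covering bound with the specific constant $n\ln n + n\ln\ln n + 5n$, which is the purpose of the citations to \cite{Rogers1963} and \cite{Verger-Gaugry2005}; the rest is the translation-averaging trick together with the elementary inequality $(1+x)^n \leq \exp(nx)$. The main obstacle, insofar as there is one, is the boundary correction: a naive Voronoi-cell argument applied to a fixed lattice (without the random shift) forces a worse prefactor like $e^2$, and the averaging step is precisely what brings the prefactor down to exactly $\exp(1)$.
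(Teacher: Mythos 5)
Your argument is correct and is exactly the derivation the paper has in mind: the paper offers no proof of this lemma, only the citations to Rogers and Verger-Gaugry, and your translation-averaging over a Rogers covering followed by the estimate $\left(1+\epsilon/\rho\right)^n\le\exp(n\epsilon/\rho)\le\exp(1)$ (using $\rho/\epsilon\ge n$) is the standard way to extract the stated bound from those references. One caveat: Rogers' theorem with the constant $n\ln n+n\ln\ln n+5n$ is for coverings by translates that are \emph{periodic} (a union of finitely many translates of a lattice), not genuine lattice coverings --- the known lattice-covering densities for the ball are worse --- but this does not harm your proof, since for a periodic center set $C=\bigcup_{i=1}^{k}(\Lambda+v_i)$ the same averaging over a fundamental domain of $\Lambda$ gives $\mathbb{E}_t\,|(C+t)\cap S|=k\vol(S)/\det(\Lambda)\le\theta_n\vol(S)/\vol(B_0(\epsilon))$, and the rest of your argument goes through verbatim.
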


The next lemma tells that under some conditions on $\mathcal V$, $\mathcal F$, the set $\mathcal V$ can be reconstructed using at most $2n$ vectors from a sufficiently small $\Gamma$, i.e. $\mathcal V$ can be reconstructed  from a relatively small amount of information. 

\begin{lemma}\label{rounding}
Let  $\mathcal C\subseteq \R^n$  be a regular convex cone that can be normalized with coefficient $f_{\mathcal C}$.  Let us have two families of vectors $\mathcal P$,  $\mathcal H\subseteq \R^d$, nonnegative integer $M$, and sets $\mathcal V \subseteq \mathcal P$, $\mathcal F\subseteq \mathcal H$  such that
\begin{enumerate}[(i)]
\item\label{item:F-collection1} $\|f\|_\infty\leq M$, $\|v\|_\infty\leq 1$ for all  $f\in {\mathcal F}$ and $v\in {\mathcal V}$;
\item \label{item:F-collection2}$\langle v,\, f\rangle \geq 0$ for all  $v\in {\mathcal V}$ and $f\in {\mathcal F}$;
\item \label{item:F-collection3} for all $v\in \mathcal P$, if $v\notin\mathcal V$ then there exists $f^*\in \mathcal F$  such that $\langle v,\, f^*\rangle \leq -1$;
\item \label{item:F-collection4} the pair $\mathcal V$, $\mathcal F$ can be factorized using the cone $\mathcal C$.
\end{enumerate}
Let us define $\rho := \sqrt{(d+1)M} \cdot f_{\mathcal C}$ and $\epsilon: =\left(4(n+d) \rho \right)^{-1}$. Then for every  $i \in \{1,\ldots, n+d\}$ there exists
    \begin{enumerate}[(1)]
        \item a vector $f_i \in \mathcal{H}$ such that $\|f_i\|_\infty \leq M$,
                \item a vector $u_{f_i} \in \Gamma := \Gamma\left(n , \rho, \epsilon \right)$, where $\Gamma$ is as in Lemma~\ref{epsilon-net};
    \end{enumerate}
    such that the following holds
    \begin{align*} \mathcal{V} = \{ x \in \mathcal P \,: \,\text{there exists } &y \in \mathcal{C^*} \cap B_0(\rho) \,\text{ such that }\\ \, &\left\vert \inprod{f_i, x}- \inprod{u_{f_i}, y} \right\vert \leq \frac{1}{4(n+d)} \quad\text{for all }\, i \in \{1,\ldots, n+d\}\}. \end{align*}
\end{lemma}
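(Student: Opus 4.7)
By hypothesis~(iv) and the primal-dual symmetry of the factorization equation, I may assume the pair $\mathcal{V},\mathcal{F}$ is realized by $a_v\in\mathcal{C}^\star$ (for $v\in\mathcal{V}$) and $b_f\in\mathcal{C}$ (for $f\in\mathcal{F}$) with $\langle v,f\rangle=\langle a_v,b_f\rangle$. Applying Theorem~\ref{thm:rescaling_automorphism} to the finite pair $\mathcal{A}:=\{a_v\}$, $\mathcal{B}:=\{b_f\}$, the factorization may be replaced by a rescaled one $\widetilde a_v,\widetilde b_f$ (obtained via an automorphism of $\mathcal{C}^\star$ and its inverse-adjoint) whose $\ell_2$-norms are controlled by $f_{\mathcal{C}}$ times an appropriate function of $\sup_{v,f}\langle v,f\rangle$. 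Combined with the $\ell_\infty$-bounds in hypothesis~(i) and the definition of $\rho$, this yields $\|\widetilde a_v\|_2,\|\widetilde b_f\|_2\leq\rho$. Then, for each $f\in\mathcal{F}$, I invoke Lemma~\ref{epsilon-net} to choose $u_f\in\Gamma(n,\rho,\epsilon)$ with $\|u_f-\widetilde b_f\|_2\leq\epsilon$.

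The ``$\subseteq$'' direction is immediate for \emph{any} choice of $f_1,\ldots,f_{n+d}\in\mathcal{F}$: for $v\in\mathcal{V}$ take the witness $y:=\widetilde a_v\in\mathcal{C}^\star\cap B_0(\rho)$, so that
\[
|\langle f_i,v\rangle-\langle u_{f_i},y\rangle|=|\langle\widetilde b_{f_i}-u_{f_i},\widetilde a_v\rangle|\leq\epsilon\rho=\tfrac{1}{4(n+d)}.
\]
Conversely, for $v\in\mathcal{P}\setminus\mathcal{V}$ hypothesis~(iii) yields $f^*\in\mathcal{F}$ with $\langle f^*,v\rangle\leq-1$; for any $y\in\mathcal{C}^\star\cap B_0(\rho)$, the cone duality $\langle\widetilde b_{f^*},y\rangle\geq 0$ combined with $\|u_{f^*}-\widetilde b_{f^*}\|_2\leq\epsilon$ gives
\[
\langle f^*,v\rangle-\langle u_{f^*},y\rangle\leq -1+\epsilon\rho<-\tfrac{1}{4(n+d)},
\]
so the approximate constraint associated with $f^*$ is violated. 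Hence if $f^*$ were among the selected $f_i$'s the reverse inclusion would be established.

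The substantive issue is that $f^*$ depends on $v$, while only $n+d$ constraints are fixed globally. My plan is an iterative greedy construction: starting from the empty list, while some $v\in\mathcal{P}\setminus\mathcal{V}$ still admits a witness $y\in\mathcal{C}^\star\cap B_0(\rho)$ satisfying all currently selected approximate constraints, I apply (iii) to that $v$ and append the resulting separating $f^*$. The main obstacle I expect is proving termination within $n+d$ iterations. My plan here is a dimension count on $(v,y)\in\R^d\times\R^n$: every functional of the form $(v,y)\mapsto\langle f,v\rangle-\langle u_f,y\rangle$ lives in a linear space of functionals on $\R^{n+d}$ of dimension at most $n+d$, and each newly-added $f_{k+1}$ must supply a functional linearly independent of those already chosen, for otherwise the previously-feasible pair $(v_k,y_k)$ that forced its inclusion would remain feasible after the update. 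Making this linear-independence argument rigorous in the presence of slab (two-sided) constraints and of the non-polyhedral cone $\mathcal{C}^\star$ is the delicate step; an alternative I would try is to extract the $n+d$ constraints in a single shot via a Carath\'{e}odory-type argument on the family $\{(f,u_f):f\in\mathcal{F}\}$ viewed as vectors in the $(n+d)$-dimensional space of linear functionals on $\R^{n+d}$, and then directly verify that these generators suffice for the separation established in the previous paragraph.
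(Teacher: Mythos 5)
Your setup (normalize the factorization so that $\|a_v\|_2,\|b_f\|_2\leq\rho$, round each $b_f$ to a net point $u_f\in\Gamma$, prove $\mathcal{V}\subseteq\ov{\mathcal{V}}$ with witness $y=a_v$, and reduce the reverse inclusion to making sure the separating $f^*$ from hypothesis~(iii) is "covered'' by the $n+d$ selected constraints) matches the paper up to the point you yourself flag as the substantive issue. But that issue is exactly where your proof stops being a proof: neither of your two plans for selecting $f_1,\ldots,f_{n+d}$ works as sketched, and the correct selection rule is the one missing idea. The paper chooses $I'\subseteq\mathcal{F}$ with $|I'|=\dim W$, $W:=\linspan\{(f,b_f):f\in\mathcal{F}\}$, so as to \emph{maximize the volume} of the parallelepiped spanned by $\{(f,b_f):f\in I'\}$. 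Maximality of the volume, via Cramer's rule, forces every $(f^*,b_{f^*})$ to be a combination $\sum_{f\in I'}\nu_f(f,b_f)$ with $|\nu_f|\leq 1$ for all $f$; then $1\leq|{-\langle f^*,v\rangle}+\langle b_{f^*},y\rangle|\leq(n+d)\max_{f\in I'}|{-\langle f,v\rangle}+\langle b_f,y\rangle|$, so some selected constraint is violated by at least $1/(n+d)$, which survives the $\epsilon\rho=1/(4(n+d))$ rounding error. The point is not that the selected vectors span $W$ --- any basis does that --- but that the expansion coefficients are uniformly bounded by $1$.

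Your two fallbacks do not supply this bound. A generic "Carath\'{e}odory-type'' extraction of a spanning subset gives coefficients $\nu_f$ that can be arbitrarily large, in which case the chain of inequalities above yields nothing. Your greedy termination argument is also broken: with two-sided slab constraints, the functional $(v,y)\mapsto\langle f_{k+1},v\rangle-\langle u_{f_{k+1}},y\rangle$ can be a linear combination of the previously selected functionals and \emph{still} be violated at a point $(v_k,y_k)$ satisfying all earlier slabs, precisely when the combination has large coefficients; so you cannot conclude linear independence of each newly added functional, and hence cannot bound the number of iterations by $n+d$. (A smaller remark: you silently swap the roles of $\mathcal{C}$ and $\mathcal{C}^\star$ in the factorization; this is harmless and in fact reconciles a small primal/dual inconsistency in the statement, but it is not the issue.) To complete your argument you need to replace "spanning subset'' by "maximum-volume spanning subset'' (equivalently, an Auerbach-type basis of $W$ drawn from $\{(f,b_f)\}$) and then run the bounded-coefficient estimate above.
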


\begin{proof} By assumption there is a $\mathcal C$-factorization $(a_v)_{v \in \mathcal{V}} \subseteq \mathcal C$, $(b_f)_{f \in {\mathcal F}}\subseteq \mathcal C^*$ for $\mathcal V$ and $\mathcal F$ such that for every $v \in \mathcal V$ and every $f\in \mathcal F$ we have
    \[  \inprod{v, f} = \inprod{a_v, b_f}\,. \]
    Due to normalization of the cone $\mathcal C$, we may assume that $\|a_v\|_2 \leq \sqrt{d M} \cdot f_{\mathcal{C}}$ for all $v\in \mathcal{V}$ and that $\|b_f\|_2 \leq \sqrt{d M}\cdot f_{\mathcal{C}}$ for all $f \in \mathcal{F}$.
    Let us now select a ``subsystem of maximum volume", as in \cite{rothvoss201101} and \cite{briet2013existence}. For linearly independent set of vectors $x_1,\ldots, x_k \subseteq \R^n$ denote the $k$-dimensional parallelepiped volume
    \[ \vol\left(\sum_{i=1}^k  \lambda_i x_i \,:\, \lambda_i \in [0,1] \quad\text{for all }\,i\right) = \det\left((x_i^{\top} x_j)_{ij}\right)^{1/2}. \]
    Clearly, if the vectors   $x_1,\ldots, x_k \subseteq \R^n$ are dependent the volume is zero. Let us define 
    \[W: = \linspan \left(\{(f,b_f): f \in \mathcal{F}\}\right)\,,\] and let $I' \subseteq \mathcal{F}$ be a subset of size $|I'| = \dim(W)$ such that $\vol(\{(f,b_f): f \in I'\})$ is maximized. Note that $|I'| \leq n + d$ since the dimension of the linear space $W$ is at most $n+d$.
    
    Now, for each $f \in \mathcal{F}$ let $\ov{b}_f$ be an element of $\Gamma$ such that 
    \[\|b_f - \ov{b}_f\|_2 \leq \epsilon = \left(4(n+d)\sqrt{(d+1)M} \cdot f_{\mathcal C}\right)^{-1},\]
    which exists by property~\eqref{epsilon-net.1} of $\Gamma$. Let us define the set $\ov{\mathcal{V}} $ as the following set
    \begin{align*}\ov{\mathcal{V}} :=\{ v\in \mathcal P \,:\, &\text{there exists } y \in\mathcal{C} \cap B_0(\rho)\text{ such that }  \\
    &\left\vert \inprod{v, f} - \inprod{\ov{b}_f, y} \right\vert \leq \frac{1}{4(n+d)}\, \text{for all }f \in I' \}. \end{align*}
    In the remaining part of the proof we show that $\ov{\mathcal{V}} = \mathcal{V}$ as desired in the statement of the lemma, by selecting $f_i$, $i \in \{1,\ldots,n+d\}$ to be the vectors in $I'$ (duplicating them, if needed) and   setting $u_f:=\ov{b}_f$ for $f \in I'$.
    
    First, we show that $\mathcal{V} \subseteq \ov{\mathcal{V}}$. Let us consider some  $v $ in $\mathcal{V} $, and then let us show that $v$ is also in $\ov{\mathcal{V}}$.  For every $v\in \mathcal{V}$ we have $\|a_v\|_2 \leq\rho = \sqrt{(n+1)M}\cdot  f_{\mathcal{C}}$, so for every $f\in I'$ we have
    \begin{align*}
        |\inprod{v, f} - \inprod{\ov{b}_f,a_v}|
        &= |\inprod{v, f} - \inprod{a_v,b_f} + \inprod{b_f - \ov{b}_f,a_v}|=   |\inprod{b_f - \ov{b}_f,a_v}|\\
        &\leq \|b_f - \ov{b}_f\|_2 \|a_v\|_2 \leq \frac{1}{4(n+d)},
    \end{align*}
    where the last line follows from the Cauchy-Schwarz inequality. Thus, by the definition of $\ov{\mathcal{V}}$, we conclude that $v$ lies in $\ov{\mathcal{V}}$.
   
    Now we show the reverse inclusion that  $  \ov{\mathcal{V}}\subseteq \mathcal{V}$. For this, suppose that there exists  $v\in \mathcal P$ that is not in $\mathcal{V}$ and let us show that $v$ is also not in  $\ov{\mathcal{V}}$. Since  $v$ is not in $\mathcal{V}$, by~\eqref{item:F-collection3} we know that there exists some $f^* \in \mathcal{F}$ such that $\inprod{f^*,v} \leq -1$. 
    
   Now, it may not be the case that $f^*$ is in $I'$. However, the vector $(f^*, b_{f^*})$ is in the linear space $W$, and so we can express $(f^*, b_{f^*})$ as a linear combination of the vectors $(f, b_f)$, $f\in I'$. In particular, there are unique multipliers $\nu \in \R^{I'}$ such that $(f^*, b_{f^*}) = \sum_{f \in I'} \nu_i(f , b_f)$.     
    Using the choice of subsystem $I'$ as a system with a maximum volume and using Cramer's rule, we have that for every $f'\in I'$
    \[ |\nu_{f'}| = \frac{\vol(\{(f,b_f): f \in I'\del\{f'\} \cup \{f^*\} \})}{\vol(\{(f,b_f): f \in I'\})} \leq 1\,.\]
    For every $y \in \mathcal{C} \cap B_0(\rho)$, we have  $\inprod{b_{f^*}, y} \geq 0$  and so we also have
    \begin{align*}
        1
        &\leq \left\vert -\inprod{f^*,v} + \inprod{b_{f^*}, y}\right\vert 
        = \left\vert \sum_{f\in I'} \nu_f \left( -\inprod{f,v} + \inprod{b_f, y} \right) \right\vert\\
        &\leq \sum_{f\in I'} |\nu_f | \left\vert -\inprod{f,v} + \inprod{b_f, y} \right\vert
        \leq (n+d) \max_{f \in I'} \left\vert -\inprod{f,v} + \inprod{b_f, y}\right\vert\,.
    \end{align*}
    Also for every $y \in \mathcal{C} \cap B_0(\rho)$ and  for every $f \in I'$, we have
    \begin{align*}
        \left\vert -\inprod{f,v} + \inprod{b_f, y} \right\vert
        &= \left\vert -\inprod{f,v} + \inprod{\ov{b}_f, y}+ \inprod{b_f- \ov{b}_f, y} \right\vert \\
        &\leq \left\vert -\inprod{f,v} + \inprod{\ov{b}_f, y}\right\vert + \left\vert \inprod{b_f- \ov{b}_f, y}  \right\vert\\
        &\leq \left\vert -\inprod{f,v} + \inprod{\ov{b}_f, y}\right\vert + \frac{1}{4(n+d)}\,,
    \end{align*}
    where the last inequality follows from $\| b_f- \ov{b}_f \|_2\leq \epsilon = \left(4(n+d)\rho \right)^{-1}$ and $\| y \|_2\leq \rho$.
    Combining this with the previous inequality \[\max_{f \in I'} \left\vert -\inprod{f,v} + \inprod{b_f, y}\right\vert\geq 1/(n+d)\,,\] we get
    \begin{align*}\max_{f \in I'} \left\vert -\inprod{f,v} + \inprod{\ov{b}_f, y}\right\vert\geq \max_{f \in I'} \left\vert -\inprod{f,v} + \inprod{b_f, y}\right\vert  - \frac{1}{4(n+d)}\\\geq\frac{1}{(n+d)} - \frac{1}{4(n+d)} \geq \frac{1}{2(n+d)}\end{align*}
    which implies that $ v$ does not lie in $\ov{\mathcal{V}}$.
\end{proof}

The next corollary outlines the strategy that is used for utilizing Lemma~\ref{rounding} for establishing lower bounds for extension complexity. 
\begin{cor}\label{cor_rounding}
Let  $\mathcal C\subseteq \R^n$  be a regular convex cone that can be normalized with coefficient $f_{\mathcal C}$. Let us have two families of vectors $\mathcal P$,  $\mathcal H\subseteq \R^d$ and a nonnegative integer $M$. Let us assume that we have a collection of $N$ objects each of which leads to a unique $\mathcal V  \subseteq \mathcal P$ and a unique $\mathcal F\subseteq \mathcal H$ such that~\eqref{item:F-collection1},~\eqref{item:F-collection2},~\eqref{item:F-collection3},~\eqref{item:F-collection4} in Lemma~\ref{rounding} are satisfied. Let us define $\rho := \sqrt{(n+1)M }\cdot f_{\mathcal C}$ and $\epsilon :=\left(4(n+d) \rho \right)^{-1}$. 

If every above pair $\mathcal V$, $\mathcal F$ can be factorized using the cone $\mathcal C$ then we have \[|N|\,\leq\,  \left(\Gamma(n,\rho,\epsilon) \,\cdot\,|\{f\in \mathcal{H}\,:\,\|f\|_\infty \leq M \}|\right)^{n+d}\,.\]
\end{cor}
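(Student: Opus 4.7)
The plan is to use Lemma~\ref{rounding} as a black box and then run a counting argument over the resulting encodings. For each of the $N$ objects, with associated pair $(\mathcal{V},\mathcal{F})$, the hypotheses~\eqref{item:F-collection1}--\eqref{item:F-collection4} of Lemma~\ref{rounding} hold by assumption, with the same $\rho$ and $\epsilon$ as in the statement of the corollary. Applying Lemma~\ref{rounding} to this pair produces $n+d$ vectors $f_1,\dots,f_{n+d}\in \mathcal{H}$ with $\|f_i\|_\infty\leq M$, together with $n+d$ vectors $u_{f_1},\dots,u_{f_{n+d}}\in \Gamma(n,\rho,\epsilon)$. I package this information into a single tuple $\bigl((f_i,u_{f_i})\bigr)_{i=1}^{n+d}$, which I will call the \emph{encoding} of the object.

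The key property of the encoding, delivered directly by the conclusion of Lemma~\ref{rounding}, is that $\mathcal{V}$ can be reconstructed from the encoding alone via the explicit set-builder formula in the lemma. Consequently, any two objects that produce the same encoding must produce the same $\mathcal{V}$. Since by hypothesis each object determines a unique associated pair $(\mathcal{V},\mathcal{F})$---and, as in the intended applications in Theorems~\ref{thm:01polytopes} and~\ref{thm:cyclic}, distinct objects yield distinct $\mathcal{V}$'s---the map sending each object to its encoding is injective.

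Counting the admissible encodings then finishes the proof: each of the $n+d$ coordinates of an encoding is a pair drawn from the finite set $\{f\in \mathcal{H}:\|f\|_\infty\leq M\}\times \Gamma(n,\rho,\epsilon)$, so there are at most $\bigl(|\Gamma(n,\rho,\epsilon)|\cdot |\{f\in\mathcal{H}:\|f\|_\infty\leq M\}|\bigr)^{n+d}$ encodings in total. Combined with injectivity, this yields exactly the bound claimed.

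There is no substantial technical obstacle; once Lemma~\ref{rounding} is in hand, the corollary is an essentially mechanical counting statement. The only subtle point worth articulating carefully is the interpretation of the ``unique $\mathcal{V}$ and unique $\mathcal{F}$'' clause as genuine injectivity of the object-to-$\mathcal{V}$ map, which is precisely what turns the encoding-count into a bound on $N$.
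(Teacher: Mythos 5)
Your proposal is correct and follows essentially the same route as the paper's one-line proof: encode each object by the $n+d$ pairs $(f_i,u_{f_i})$ produced by Lemma~\ref{rounding}, note that $\mathcal{V}$ is reconstructible from the encoding, and count the possible encodings. Your explicit remark that the ``unique $\mathcal{V}$'' hypothesis is what makes the encoding map injective is a fair clarification of what the paper leaves implicit.
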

\begin{proof}
    Indeed, by Lemma~\ref{rounding} each $\mathcal V$ can be uniquely identified by $n+d$ vectors in $\{f\in \mathcal{H}\,:\,\|f\|_\infty \leq M \}\times \Gamma(n,\rho,\epsilon)$, leading to the desired inequality.
\end{proof}

\subsection{Relationship to Extension Complexity}

Using Corollary~\ref{cor_rounding} and Theorem~\ref{thm:01polytopes}, we can show that there are 0/1 polytopes in $\R^d$ with no  extension over  a cone $\mathcal{C}\subseteq \R^n$ if both $f_{\mathcal C}$ and $n$ are polynomial in $n$. The statement is equivalent to showing that at least one of the pairs $\mathcal V$, $\mathcal F$ from Theorem~\ref{thm:01polytopes} cannot be factorized using the cone $\mathcal C$. The proof of this fact follows the proofs of \cite{rothvoss201101} and \cite{briet2013existence}.  
For the sake of contradiction, let us assume that every  pair $\mathcal V$, $\mathcal F$ as in Theorem~\ref{thm:01polytopes} can be factorized using a cone $\mathcal C$ for each $\mathcal{X}\subseteq \{0,1\}^d$, where $\mathcal C\subseteq \R^n$ can be normalized with coefficient $f_{\mathcal C}$. Thus by Lemma~\ref{epsilon-net} and Lemma~\ref{rounding}, the total number of different possible elements $\Gamma$ is at most
\begin{align*}
|\Gamma| \leq &\exp(1)(n\ln n+n \ln\ln n +5 n) \left(\frac{\rho}{\epsilon}\right)^n =\\
&\exp(1)(n\ln n+n \ln\ln n +5 n)\left(4\cdot \rho^2 \cdot(n+d)\right)^n\leq 
\\
&\exp(1)(n\ln n+n \ln\ln n +5 n)\left( 4\cdot (d+1) M \cdot f^2_{\mathcal C}\cdot (n+d) \right)^n\leq \\
&n^2\cdot \left( 3 \cdot (d+1) M \cdot f^2_{\mathcal C} \cdot n \right)^n\,.\end{align*}
Thus, the total number of different possible $\mathcal V$ is at most
$$\left(\left(2\cdot 2^{d\log(2d)}+1\right)^{d+1} \cdot n^2\cdot \left( 3 \cdot (d+1) 2^{d\log(2d)} \cdot f^2_{\mathcal C} \cdot n \right)^n\right)^{n+d}$$
and so 
$$ 2^{2^d}-1\leq \left(\left(2\cdot 2^{d\log(2d)}+1\right)^{d+1}\cdot n^2\cdot \left( 3 \cdot (d+1) 2^{d\log(2d)} \cdot f^2_{\mathcal C} \cdot n \right)^n\right)^{n+d},$$
showing that $n\cdot f_{\mathcal C}$  has to be exponentially large with respect to $d$ in order for $\mathcal C$ to lead to an extension of every $0/1$ polytope of dimension $d$.

Similarly, Corollary~\ref{cor_rounding} and Theorem~\ref{thm:cyclic} show that there are cyclic polytopes corresponding to Theorem~\ref{thm:cyclic} with no  extension over a cone $\mathcal{C}\subseteq \R^n$ if both $f_{\mathcal C}$ and $n$ are polynomial in $n$ and $t$. Lower bounds for cyclic polytopes were studied in~\cite{Fiorini12} and this proof would be analogous to theirs.

\bibliography{literature.bib}

\end{document}